\definecolor{chianti}{rgb}{0.6,0,0}
\definecolor{meretale}{rgb}{0,0,.6}
\definecolor{leaf}{rgb}{0,.35,0}
\newtheorem{theorem}{Theorem}[section]
\newtheorem{lemma}[theorem]{Lemma}
\newtheorem{corollary}[theorem]{Corollary}
\newtheorem{proposition}[theorem]{Proposition}
\theoremstyle{definition}
\newtheorem{definition}[theorem]{Definition}
\newtheorem{example}[theorem]{Example}
\numberwithin{equation}{section}
\DeclareMathOperator{\Brad}{B-rad}
\def\GL{\operatorname{GL}}
\def\HS{\operatorname{HS}}
\def\gin{\operatorname{gin}}
\def\mm{\mathfrak{m}}
\newcommand{\M}{{\mathcal M}}
\DeclareMathOperator{\pol}{pol}
\def\ini{\operatorname{in}}
\def\to{\longrightarrow}
\def\mapsto{\longmapsto}
\def\x{\bf{x}}
\def\y{\bf{y}}
\def\z{\bf{z}}
\def\cA{\mathcal{A}}
\def\FF{\mathbb{F}}
\def\NN{\mathbb{N}}
\def\QQ{\mathbb{Q}}
\def\ZZ{\mathbb{Z}}
\def\fF{\mathfrak{F}}
\def\fG{\mathfrak{G}}
\def\cK{\mathcal{K}}
\newcommand{\nc}{\newcommand}
\nc{\Aldo}[1]{{\color{red} \sf $\heartsuit$ [#1]}}
\nc{\Emanuela}[1]{{\color{leaf} \sf $\heartsuit$ [#1]}}
\nc{\Volkmar}[1]{{\color{blue} \sf $\heartsuit$ [#1]}}
\nc{\red}{\textcolor{red}}
\nc{\green}{\textcolor{green}}
\nc{\blue}{\textcolor{blue}}
\begin{document}

\title{Generalized binomial edge ideals are Cartwright-Sturmfels}

\author{Aldo Conca}
\address{Dipartimento di Matematica, Universit\`a di Genova, Dipartimento di Eccellenza 2023-2027, Via Dodecaneso 35, I-16146 Genova, Italy}
\email{aldo.conca@unige.it}

\author{Emanuela De Negri}
\address{Dipartimento di Matematica, Universit\`a di Genova, Dipartimento di Eccellenza 2023-2027, Via Dodecaneso 35, I-16146 Genova, Italy}
\email{emanuela.denegri@unige.it}

\author{Volkmar Welker}
\address{Philipps Universit\"at Marbu\-rg, Fachbereich Mathematik und Informatik, 35032 Marburg, Germany }
\email{welker@mathematik.uni-marburg.de}

\thanks{A.C. and E.D.N. are supported by PRIN~2020355B8Y ``Squarefree Gr\"obner degenerations, special varieties and related topics,'' by MIUR Excellence Department Project awarded to the Department of Mathematics, University of Genova, CUP D33C23001110001 and by INdAM-GNSAGA}

\dedicatory{Dedicated to the Memory of our Teacher and Mentor J\"urgen Herzog }

\begin{abstract}
Binomial edge ideals associated to a simple graph $G$ were introduced in \cite{HHHKR} by Herzog and collaborators  and, independently, by Ohtani in \cite{Ohtani}. They became an ``instant classic" in combinatorial commutative algebra with more than 100 papers devoted to their investigation over the past 15 years. They exhibit many striking properties, including being radical and, moreover, Cartwright–Sturmfels. Using the fact that binomial edge ideals can be seen as ideals of $2$-minors of a matrix of variables with two rows, generalized binomial edge ideals of $2$-minors of matrices of $m$ rows were introduced by Rauh \cite{Rauh} and proved to be radical. The goal of this paper is to prove that generalized binomial edge ideals are Cartwright–Sturmfels. On the way we provide results on ideal constructions preserving the Cartwright-Sturmfels property. We also give examples and counterexamples to
the Cartwright-Sturmfels property for higher minors.
\end{abstract}
\maketitle

\section{Introduction}

Ideals with a radical initial ideal are special: their homological relationship with the initial ideal is exceptionally tight. This intuition underlies Herzog’s conjecture, now a theorem \cite{CoVa}, and it is precisely why such ideals are nowadays referred to as Herzog ideals \cite{HTVW}.
Cartwright–Sturmfels ideals, introduced in \cite{CDG2}, form a particularly strong subclass of Herzog ideals. A $\ZZ^n$-graded ideal in a $\ZZ^n$-graded polynomial ring is Cartwright-Sturmfels if its $\ZZ^n$-graded generic initial ideal is radical (this is one of the possible definitions). In fact, every initial ideal of a Cartwright-Sturmfels ideal is radical.
Over the last years, several families of Cartwright-Sturmfels ideals have been identified \cite{CDG1,CDG2,CDG3,CDG4,CDG5,CoWe,MaTa}, generalizations have been proposed \cite{CCC} and the Hilbert-schemes associated to them have been investigated \cite{RaSa}. 

Binomial edge ideals are generated by collections of $2$-minors of a generic $2\times n$ matrix that correspond to the edges of a graph. They were introduced by Herzog and collaborators in \cite{HHHKR} and, independently, by Ohtani in \cite{Ohtani}. The notion  was later generalized by Rauh in \cite{Rauh}. Ordinary binomial edge ideals were proved to be Cartwright-Sturmfels in \cite{CDG4,CDG5}.
The main objective of this paper is to show that generalized binomial edge ideals are also Cartwright–Sturmfels, see  Theorem \ref{main}.  We also prove two auxiliary results concerning general Cartwright-Sturmfels ideals. Firstly, we describe  certain  Cartwright-Sturmfels subideals of a given Cartwright–Sturmfels ideal (Proposition \ref{easy2}). Secondly, we show that the sum of two Cartwright–Sturmfels ideals is Cartwright–Sturmfels (Proposition \ref{prop:linked}) if the multidegrees of the generators of the two ideals overlap at most in one component. 
We conclude the paper with a discussion of families of higher-order minors that define Cartwright–Sturmfels ideals and families that do not define 
Cartwright-Sturmfels ideals. Our results in this direction are preliminary and leave room for further refinement.


\section{Cartwright-Sturmfels ideals }
\label{section:CS}

Let $K$ be a field and let
\(S=K[x_{ij}\mid 1\leq j\leq n,\ 1\leq i\leq m_j]\)
 be the polynomial ring over $K$ endowed with the $\ZZ^n$-grading induced by setting $\deg(x_{ij})=e_j$, where $e_j\in\ZZ^n$ is the $j$-th standard basis vector. To avoid technicalities when dealing with
 generic initial ideals, we assume that $K$ is infinite.

Since in this paper all $\ZZ^n$-graded 
$S$-modules are ideals in the ring $S$ or quotients of $S$, from now 
on we assume that $\ZZ^n$-graded modules $M$
are finitely generated and
$M = \bigoplus_{a \in \NN^n} M_a$ 
has non-zero components $M_a$ concentrated in
non-negative degrees. For $A\subseteq [n]$ we write
$M_A$ for the component
$M_{\sum_{j\in A} e_j}$ and say that an element $f \in M_A$
has $\ZZ^n$-degree $A$.

The group $G=\GL_{m_1}(K)\times\cdots\times\GL_{m_n}(K)$ acts  on $S$ as a group of $\ZZ^n$-graded $K$-algebra automorphisms with $\GL_{m_j}(K)$ acting on the variables
$x_{1j},\ldots, x_{m_jj}$ for $1 \leq j \leq n$.  
Let $>$ be a term order on $S$. All term orders in this paper are assumed to satisfy  $x_{1j}>x_{2j}> \cdots >x_{m_j j}$ for all $j$.

As in the standard $\ZZ$-graded situation, since $K$ is infinite there exists a non-empty Zariski open $U\subseteq G$ such that $\ini_>(gI)=\ini_>(g^\prime I)$ for all $g,g^\prime\in U$, where $\ini_>(I)$ is the 
initial ideal of $I$ with respect to $>$. 
We call such a $g$ a generic $\ZZ^n$-graded change of coordinates. We refer the   reader to~\cite[Theorem~15.23]{E} for details on generic initial ideals in the $\ZZ$-graded case and to~\cite[Section~1]{ACD} for a similar discussion in the $\ZZ^n$-graded case.
This leads to the definition of the $\ZZ^n$-graded generic initial ideal. 

\begin{definition}
The $\ZZ^n$-graded generic initial ideal $\gin_>(I)$ of $I$ with respect to $>$ is the ideal 
$\ini_>(gI)$, where $g$ is a generic $\ZZ^n$-graded coordinate change.  
\end{definition}

Let $B=B_{m_1}(K)\times \cdots  \times B_{m_n}(K)$ be the Borel subgroup of $G$ of upper triangular matrices in $G$. 

It turns out that $\gin(I)$ is fixed by the action of $B$.
Ideals fixed by $B$ are called Borel-fixed ideals. Let $J$ be a monomial ideal. 
If $\text{char}(k) = 0$, or if $J$ radical, the following condition (BF) is known to be equivalent to being 
Borel-fixed. 

\begin{align*}
    (\text{BF}) & \text{ for all monomials } x_{ij}w \in J \text{ one has }
    x_{kj}w\in J \text{ for all } k<i.
\end{align*}

Note that, if instead of our convention
$x_{1j}>x_{2j}> \cdots > x_{m_j\,j}$
the variables in each block are ordered differently then $\gin(I)$ is fixed by
a conjugate of $B$ and the condition (BF) changes accordingly.

Cartwright-Sturmfels ideals were introduced in \cite{CDG2} and their properties were
further developed in \cite{CDG1,CDG3,CDG4}.  Radical Borel-fixed ideals play an important role in this theory. 

\begin{definition}  
We let $\Brad(S)$  be the set of radical monomial ideals of $S$ which are
Borel-fixed.
\end{definition} 

Before we give the definition of Cartwright-Sturmfels ideals, we have to 
introduce the $\ZZ^n$-graded Hilbert series.
For a $\ZZ^n$-graded $S$-module $M=\oplus_{a\in \NN^n} M_a$ the $\ZZ^n$-graded Hilbert series of $M$ is defined as the formal
 power series  
$$\HS_M(Z_1,\dots,Z_n)=\sum_{a\in \NN^n} (\dim_K M_a)  Z^a\in\QQ[[Z_1,\dots,Z_n]].$$ 

\begin{definition}
\label{sameHS}
A $\ZZ^n$-graded ideal $I$ of $S$ is Cartwright-Sturmfels, or CS for short, if there exists  $J\in\Brad(S)$  which has the same 
$\ZZ^n$-graded Hilbert series as $I$. 
\end{definition}

By \cite[Theorem 3.5]{CDG1} the ideal $J$ that appears in Definition \ref{sameHS} is uniquely determined by (the $\ZZ^n$-graded Hilbert series of) $I$.  Furthermore it is proved in \cite[Corollary 3.7]{CDG1} that 
$J = \gin(I)$ where the gin is computed with respect to any term order satisfying our constraint on the order of variables. 
This leads to the following characterization, see \cite[Proposition 2.6]{CDG2}.

\begin{proposition}\label{ginRad}
A $\ZZ^n$-graded ideal $I$ is Cartwright-Sturmfels if and only if $\gin(I) \in \Brad(S)$.
\end{proposition}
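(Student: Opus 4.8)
The plan is to obtain both implications of Proposition~\ref{ginRad} almost immediately from the two results of~\cite{CDG1} recalled just above, once one records the elementary fact that $I$ and $\gin(I)$ have the same $\ZZ^n$-graded Hilbert series. So the first step is to note this fact: a generic $\ZZ^n$-graded change of coordinates $g$ is a $\ZZ^n$-graded $K$-algebra automorphism of $S$, hence $I$ and $gI$ have equal $\ZZ^n$-graded Hilbert series; and for any term order $>$ the ideal $\ini_>(gI)$ has the same $K$-dimension in each $\ZZ^n$-graded component as $gI$. Combining these yields $\HS_{\gin(I)}=\HS_I$. This is the $\ZZ^n$-graded counterpart of the standard $\ZZ$-graded statement and is the only point that really needs a line of justification.

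For the ``if'' direction, suppose $\gin(I)\in\Brad(S)$. Then $J:=\gin(I)$ is a radical Borel-fixed monomial ideal whose $\ZZ^n$-graded Hilbert series equals that of $I$, so $I$ is Cartwright-Sturmfels directly by Definition~\ref{sameHS}.

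For the ``only if'' direction, suppose $I$ is Cartwright-Sturmfels and let $J\in\Brad(S)$ have the same $\ZZ^n$-graded Hilbert series as $I$, as furnished by Definition~\ref{sameHS}. By~\cite[Corollary~3.7]{CDG1} one has $J=\gin(I)$; here the uniqueness assertion of~\cite[Theorem~3.5]{CDG1} guarantees that $J$ is in fact the only element of $\Brad(S)$ with this Hilbert series, so the identification is unambiguous. Hence $\gin(I)=J\in\Brad(S)$.

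I do not expect a genuine obstacle: the content is entirely carried by~\cite[Theorem~3.5]{CDG1} and~\cite[Corollary~3.7]{CDG1}, and Proposition~\ref{ginRad} merely repackages them into the stated equivalence. The one step that should be spelled out carefully is the invariance of the $\ZZ^n$-graded Hilbert series under passing to $\gin(I)$, since everything else is a formal consequence of the quoted statements.
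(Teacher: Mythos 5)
Your argument is correct and follows exactly the route the paper itself indicates: the paper does not give a standalone proof of Proposition~\ref{ginRad} but states it as a consequence of \cite[Theorem~3.5]{CDG1} (uniqueness of $J$) and \cite[Corollary~3.7]{CDG1} ($J=\gin(I)$), citing \cite[Proposition~2.6]{CDG2}. Your write-up just spells out the two implications and the Hilbert-series invariance of $\gin$, which is the same reasoning the paper leaves implicit.
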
 

In \cite[Theorem 1.16]{CDG2} it is shown that the class of CS ideals is closed under some natural operations. In particular Theorem 1.16(5) states the following:
\begin{proposition}
\label{sections}
Let $L$ be a $\ZZ^n$-graded linear form in $S$, and identify $S/(L)$  with a polynomial ring with the induced $\ZZ^n$-graded structure.
If $I$ is a CS ideal of $S$ then $I+(L)/(L)$ is a CS ideal
in $S/(L)$.
\end{proposition}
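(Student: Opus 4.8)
The plan is to use the Hilbert-series characterization of the CS property from Definition \ref{sameHS}. Since $I$ is CS in $S$, there exists $J \in \Brad(S)$ with the same $\ZZ^n$-graded Hilbert series as $I$, and by the cited uniqueness $J = \gin_>(I)$ for any admissible term order $>$. I would like to exhibit a radical Borel-fixed ideal in the polynomial ring $\bar S := S/(L)$ whose $\ZZ^n$-graded Hilbert series agrees with that of $(I+(L))/(L)$. The natural candidate is obtained from $J$ by a colon/quotient construction corresponding to cutting down by a generic linear form: after a generic $\ZZ^n$-graded change of coordinates $g$, the form $gL$ is a generic linear form of its multidegree $e_j$ (for the appropriate $j$), and one expects $\gin_>\bigl((gI+(gL))/(gL)\bigr)$ to be controlled by $\gin_>(I) = J$ together with the position of a generic variable.

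The key steps, in order, are: (1) Replace $I$ by $gI$ for a suitably generic $g \in G$, so that simultaneously $\ini_>(gI) = J$ and $gL$ is a generic linear form of its $\ZZ^n$-degree; since the Zariski-open conditions intersect, such $g$ exists. Up to rescaling we may assume $gL = x_{m_j\, j}$ for the relevant block $j$ (using that $>$ orders $x_{1j} > \cdots > x_{m_j\, j}$, so $x_{m_j\,j}$ is the smallest variable in its block). (2) Identify $\bar S = S/(x_{m_j\,j})$ with the polynomial ring in the remaining variables, carrying the induced $\ZZ^n$-grading, and observe that the induced term order still satisfies the variable-ordering constraint. (3) Show that $J' := (J + (x_{m_j\,j}))/(x_{m_j\,j})$, viewed in $\bar S$, is the $\ZZ^n$-graded gin of $(gI+(x_{m_j\,j}))/(x_{m_j\,j})$, hence lies in $\Brad(\bar S)$: it is visibly Borel-fixed (setting a variable to zero in a monomial ideal preserves condition (BF) in the remaining blocks) and radical (quotient of a radical monomial ideal by a variable that is either in it or not). (4) Compare Hilbert series: because $x_{m_j\,j}$ is a generic linear form, it is regular on $S/I$ in the relevant sense that makes the Hilbert series of $S/(I+(L))$ equal to that of $(S/J) / (x_{m_j\,j} \cdot S/J)$ — i.e. the passage to the quotient affects both $I$ and $J$ "in the same way" because they share a Hilbert series and the cut is by a generic form. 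This yields $\HS_{\bar S/((I+(L))/(L))} = \HS_{\bar S / J'}$, and equivalently on the ideals, so $(I+(L))/(L)$ is CS in $\bar S$.

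The main obstacle is step (4): making precise the claim that a generic linear form interacts with $I$ and with $\gin(I) = J$ compatibly at the level of $\ZZ^n$-graded Hilbert series. In the standard $\ZZ$-graded theory this is the statement that a generic linear form is "almost regular" and that $\gin$ commutes with generic hyperplane sections; in the $\ZZ^n$-graded setting one must be careful that the form $L$ lives in a single block-degree $e_j$, so genericity can only be exploited within the $j$-th block — but this is exactly enough, since cutting by $L$ only lowers the $j$-th coordinate of multidegrees. I would handle this either by invoking the $\ZZ^n$-graded generic-initial-ideal machinery of \cite{ACD} directly (computing $\gin$ of the section inside $\bar S$ and matching it with the combinatorially-defined $J'$), or, more cleanly, by citing the already-established fact \cite[Theorem 1.16]{CDG2} that the CS class is closed under this operation and merely recording here why the argument goes through; since the proposition is quoted from \cite{CDG2}, in the paper itself it suffices to reference that result, and the sketch above indicates the mechanism.
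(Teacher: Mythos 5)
The paper does not prove Proposition~\ref{sections}: it records it as Theorem 1.16 of \cite{CDG2} and leaves it at that. You correctly observe this at the end of your sketch, so as far as what actually belongs in this paper your conclusion (``it suffices to reference that result'') matches the source exactly.

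That said, if the sketch is meant to indicate a genuine mechanism, two points need repair. First, in step (1) the claim ``up to rescaling we may assume $gL = x_{m_j\,j}$'' does not hold: for generic $g$, the form $gL$ is a generic linear combination of \emph{all} the variables $x_{1j},\dots,x_{m_j\,j}$ in block $j$, and a scalar rescaling cannot turn it into a single variable. Normalizing $gL$ to $x_{m_j\,j}$ requires an additional change of coordinates $h\in \GL_{m_j}(K)$, and there is no guarantee that $\ini_>(hgI)$ still equals $J=\gin_>(I)$ after composing with this specific $h$. Worse, with respect to the fixed term order the initial term of a \emph{generic} linear form of degree $e_j$ is $x_{1j}$ (the largest variable in its block), not $x_{m_j\,j}$ (the smallest). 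So the passage from $\ini_>(gI+(gL))$ to $(J+(x_{m_j\,j}))/(x_{m_j\,j})$ in step (3) is not a straightforward specialization; it is precisely the nontrivial content of the ``generic hyperplane section commutes with gin'' phenomenon. Second, as you already flag, step (4) is the heart of the matter: equating $\HS_{S/(I+(L))}$ with $\HS_{S/(J+(x_{m_j\,j}))}$ reduces, via the exact sequence for multiplication by the linear form, to showing $\HS_{S/(I:L)}=\HS_{S/(J:x_{m_j\,j})}$ for generic $L$, a Bayer--Stillman/Green-type statement that needs a genuine argument in the $\ZZ^n$-graded setting. Both of these are handled in \cite{CDG2}, and citing that source, as the paper does, is the economical and correct choice.
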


We end the section with two results from \cite{Radical:support}, that will be useful in the following.

Consider the polynomial ring $T = K[y_1,\dots, y_n]$ with  
$\ZZ^n$-grading $\deg y_{j}=e_j$ for $j\in [n]$.  
Let $\M(T,m)$ be the set of the monomial ideals of $T$ generated by monomials $y_1^{a_1}\cdots y_n^{a_n}$, whose exponent vector is bounded from above by $\mm=(m_1,\dots,m_n)$,
i.e.,  $a_j \leq m_j$ for $j \in [n]$. 
By identifying $y_j$ with $x_{j1} \in S$  we can consider
$T$ as a subring of $S$. Extending this convention,
for a monomial $y_1^{a_1}\cdots y_n^{a_n}$ with exponent vector
bounded by $\mm$ we consider its polarization 
$x_{11}\cdots x_{1a_1} \cdots x_{n1}\cdots x_{na_n}$
as a monomial in $S$. Via this identification, the 
polarization $\pol(I)$ of any $I\in \M(T,\mm)$ can be seen as
an ideal of $S$. 

In \cite{Radical:support} it is proved that the  $\psi: \M(T,\mm) \to  \Brad(S)$ sending $I \in \M(T,\mm)$ to $\psi(I)=J=\pol(I)^*$, where 
  $^*$ denotes the Alexander dual, is a bijection between $\M(T,\mm)$ and  $\Brad(S)$

For a $\ZZ^n$-graded $S$-module $M=\oplus_{a\in \NN^n} M_a$ the $\ZZ^n$-graded Hilbert series of $M$ 
has a rational expression  
 $$\HS_M(Z)=\frac{\cK_M(Z_1,\dots,Z_n)}{\prod_{i=1}^n(1-Z_i)^{m_i}}.$$
   The polynomial $\cK_M(Z)=\cK_M(Z_1,\dots,Z_n)$ has coefficients in $\ZZ$ and is known as the $\cK$-polynomial of $M$.    

\begin{proposition}\cite[Proposition 2.4]{Radical:support}
\label{JandE} 
 Let $J\in \Brad(S)$ and $I\in \M(T,\mm)$. Then
 $\psi(I) = J$ if and only if $\cK_{S/J}(1-Z)=\cK_I(Z)$.  
\end{proposition}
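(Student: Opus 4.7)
The plan is to reduce both sides of the claimed equality $\cK_{S/J}(1-Z) = \cK_I(Z)$ to the same Taylor-style inclusion-exclusion over the minimal generators $u_1,\ldots,u_r$ of $I$. For the right-hand side, I would apply inclusion-exclusion to the $T$-ideal $I = \sum_k (u_k)T$: intersections of principal monomial ideals are generated by least common multiples, and $\HS_{(v)T}(Z) = Z^{\deg v}/\prod_j(1-Z_j)$, so multiplying through by $\prod_j(1-Z_j)$ yields
$$\cK_I(Z) = \sum_{\emptyset \neq \Sigma \subseteq [r]} (-1)^{|\Sigma|+1}\, Z^{\deg \operatorname{lcm}_{k\in \Sigma}(u_k)}.$$

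For the left-hand side I would start from the Stanley--Reisner expansion of the $\cK$-polynomial of a squarefree monomial ideal,
$$\cK_{S/J}(Z) = \sum_{F \in \Delta_J} \prod_{j=1}^n Z_j^{f_j}(1-Z_j)^{m_j-f_j}, \qquad f_j = |F \cap \{x_{ij}\}_{i=1}^{m_j}|,$$
substitute $Z_j \mapsto 1-Z_j$, and re-index by the complement $G$ of $F$ in the full vertex set $\{x_{ij}\}$. Since $J = \pol(I)^*$, Alexander duality identifies $J$ with $\bigcap_{k=1}^{r} (x_{ij} : (i,j) \in F_k)$, where $F_k = \operatorname{supp}(\pol(u_k))$; consequently $F \in \Delta_J$ exactly when $G \supseteq F_k$ for some $k$. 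Applying inclusion-exclusion to this ``some $k$'' condition gives
$$\cK_{S/J}(1-Z) = \sum_{\emptyset \neq \Sigma \subseteq [r]} (-1)^{|\Sigma|+1} \sum_{G \supseteq H_\Sigma} \prod_{j=1}^n Z_j^{g_j}(1-Z_j)^{m_j-g_j},$$
with $H_\Sigma = \bigcup_{k \in \Sigma} F_k = \operatorname{supp}(\pol(\operatorname{lcm}_{k\in \Sigma}(u_k)))$. The inner sum factors over blocks, and in block $j$ the binomial identity $(Z_j + (1-Z_j))^{m_j - h_\Sigma^{(j)}} = 1$ collapses it to $Z_j^{h_\Sigma^{(j)}}$; since polarization preserves the $\ZZ^n$-degree, the total contribution is $Z^{\deg \operatorname{lcm}_{k \in \Sigma}(u_k)}$, and matching term by term with the expansion of $\cK_I(Z)$ establishes the ``only if'' direction.

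For the converse I would invoke uniqueness. Given $J \in \Brad(S)$ and $I \in \M(T,\mm)$ with $\cK_{S/J}(1-Z) = \cK_I(Z)$, set $I_0 = \psi^{-1}(J)$, which exists because $\psi$ is a bijection. The ``only if'' direction applied to $I_0$ yields $\cK_{I_0}(Z) = \cK_{S/J}(1-Z) = \cK_I(Z)$, hence $\HS_{I_0} = \HS_I$; since the Hilbert series of a monomial ideal in $T$ is literally the formal sum of its monomials, $I = I_0$ and so $\psi(I) = J$. The main technical obstacle is the Alexander-dual bookkeeping at the start of Step~2: once one recognizes that the faces of $\Delta_J$ are exactly the complements of subsets containing some $F_k$, the binomial collapse proceeds mechanically.
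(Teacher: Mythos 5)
The paper does not prove this statement; it quotes it as Proposition 2.4 of \cite{Radical:support}, so there is no in-paper argument to compare against. Your proof is, however, correct and self-contained. The forward direction correctly combines: (a) the Taylor-style inclusion--exclusion $\cK_I(Z)=\sum_{\emptyset\neq\Sigma\subseteq[r]}(-1)^{|\Sigma|+1}Z^{\deg\operatorname{lcm}_{k\in\Sigma}u_k}$ over the minimal generators of $I$; (b) the Stanley--Reisner expansion of $\cK_{S/J}$, the substitution $Z_j\mapsto 1-Z_j$, and the reindexing by complements $G=V\setminus F$; (c) the fact that, because $J=\pol(I)^*=\bigcap_k P_{F_k}$ with $F_k=\operatorname{supp}(\pol(u_k))$, the facets of $\Delta_J$ are $V\setminus F_k$, so $F\in\Delta_J$ iff $G\supseteq F_k$ for some $k$; and (d) the blockwise binomial collapse, together with the observation --- which is the real crux and which you make correctly --- that the initial-segment structure of $\pol$ forces $\bigcup_{k\in\Sigma}F_k=\operatorname{supp}\bigl(\pol(\operatorname{lcm}_{k\in\Sigma}u_k)\bigr)$, so the surviving monomial in block $j$ has exponent equal to the $j$-th exponent of $\operatorname{lcm}_{k\in\Sigma}u_k$, matching (a) term by term. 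The converse via bijectivity of $\psi$ and the fact that the $\ZZ^n$-graded Hilbert series of a monomial ideal in $T$ records precisely which monomials lie in it (each graded piece of $T$ being one-dimensional) is also sound. The only things left tacit are routine: polarization carries minimal generators to minimal generators with pairwise incomparable supports (needed for the facet description in (c)), and the bound $\mm$ ensures $\pol(u_k)$ is a squarefree monomial in $S$.
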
 
 
Consider now a sequence $\cA = A_1 , \ldots,  A_r$ of subsets of $[n]$. We define a
labeled undirected graph $G(\cA)$ with vertex set $\cA$. 
For $1 \leq \ell < k \leq r$ and every $j \in A_\ell \cap A_k$ we put 
an edge labeled by $j$ between $A_\ell$ and $A_k$. Note that, there may be parallel
edges in $G(\cA)$ but no loops.
By a cycle in $G(\cA)$ we mean a cycle in the
usual graph theoretic sense. 

We will use the following result which is part of the more comprehensive 
Theorem 4.1 \cite{Radical:support}, our conditions
 are conditions (4), (6)  in  Theorem 4.1.

\begin{proposition} \label{prop:456}
Given a sequence $\cA = A_1 , \ldots, A_r$ of non-empty subsets of $[n]$ 
the following are equivalent:
\begin{itemize}
\item[(1)] The graph $G(\cA)$ has no cycles with non-constant edge labels.
\item[(2)] There exists a field $K$ and numbers $m_1 , \ldots, m_n$ and a regular sequence $f_1 ,\ldots, f_r$ of polynomials $f_\ell$ of degree $A_\ell$ in $S=K[x_{ij}\mid 1\leq j\leq n,\ 1\leq i\leq m_j]$  such that the ideal $(f_1 ,\ldots , f_r)$ is CS.
\end{itemize}
\end{proposition}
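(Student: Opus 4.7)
The plan is to reduce both implications to Proposition~\ref{JandE}. Given a regular sequence $f_1,\ldots,f_r$ in $S$ with $\deg f_\ell = A_\ell$, the Koszul resolution gives
\[
\cK_{S/(f_1,\ldots,f_r)}(Z) \;=\; \prod_{\ell=1}^r\Bigl(1 - \prod_{j \in A_\ell} Z_j\Bigr).
\]
Since $\gin$ preserves K-polynomials, Proposition~\ref{ginRad} combined with Proposition~\ref{JandE} shows that $(f_1,\ldots,f_r)$ is CS if and only if
\[
\prod_{\ell=1}^r\Bigl(1 - \prod_{j \in A_\ell}(1-Z_j)\Bigr) \;=\; \cK_{I'}(Z)
\]
for some $I' \in \M(T,\mm)$ (with $\mm$ sufficiently large). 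Each factor equals $\cK_{I_\ell}(Z)$, where $I_\ell=(y_j:j\in A_\ell)\subseteq T$, so the question becomes whether the product of the K-polynomials of these ``single-block'' ideals is itself the K-polynomial of an honest ideal in $\M(T,\mm)$.

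For $(1)\Rightarrow(2)$ I would induct on $r$. The hypothesis on $G(\cA)$ is equivalent to saying that, after contracting each $j$-monochromatic connected component, the resulting multigraph is a forest; such a forest (for $r\ge 2$) always admits a leaf, i.e.\ some $A_{\ell_0}$ with $|A_{\ell_0}\cap\bigcup_{\ell\ne\ell_0}A_\ell|\le 1$. Remove $A_{\ell_0}$ and apply the induction hypothesis to $\cA\setminus\{A_{\ell_0}\}$ to obtain a CS regular sequence $f_1,\ldots,f_{r-1}$. After possibly enlarging the $m_j$'s, choose $f_{\ell_0}$ generic of multi-degree $A_{\ell_0}$. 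Genericity ensures that $f_{\ell_0}$ is a non-zero divisor modulo $(f_1,\ldots,f_{r-1})$, and the leaf condition is exactly the ``single-component overlap'' hypothesis of Proposition~\ref{prop:linked}, so $(f_1,\ldots,f_{r-1})+(f_{\ell_0})$ remains CS.

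For $(2)\Rightarrow(1)$ I argue contrapositively. Suppose $G(\cA)$ contains a cycle $A_{\ell_1},\ldots,A_{\ell_c},A_{\ell_1}$ with edge labels $j_1,\ldots,j_c$ not all equal, and let $(f_1,\ldots,f_r)$ be a CS regular sequence. By the reformulation above, $\prod_\ell(1-\prod_{j\in A_\ell}(1-Z_j))$ must equal $\cK_{I'}(Z)$ for some $I'\in\M(T,\mm)$. Expanding and inspecting a ``cycle-adapted'' coefficient of this product (essentially tracking the contribution of a monomial built from the labels $j_1,\ldots,j_c$ along the cycle), the non-constant labels produce a signed coefficient whose sign or magnitude cannot be matched by the Taylor-type resolution shape of any ideal whose generators have exponents bounded by $\mm$. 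This yields the desired contradiction.

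The main obstacle is the inductive step in $(1)\Rightarrow(2)$: we must simultaneously preserve the regular-sequence property (handled by genericity) and the CS property (handled by Proposition~\ref{prop:linked}, whose ``single-component overlap'' hypothesis is precisely the combinatorial content of a leaf in the color-contracted graph). For the converse direction the delicate point is to isolate the right coefficient in the K-polynomial that witnesses the cycle obstruction; the key observation is that a non-constant cycle contributes a signed monomial to the expansion that cannot be realised as a coefficient of $\cK_{I'}(Z)$ for any $I'$ with exponents bounded by $\mm$.
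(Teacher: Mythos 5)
First, note that the paper does not actually prove Proposition~\ref{prop:456}; it cites it from \cite[Theorem~4.1]{Radical:support}, so there is no in-paper proof to compare against. Your $(1)\Rightarrow(2)$ plan — reduce via Proposition~\ref{JandE} to a $\cK$-polynomial identity, then peel off a ``leaf'' block $A_{\ell_0}$ with $|A_{\ell_0}\cap\bigcup_{\ell\neq\ell_0}A_\ell|\leq 1$ and invoke Proposition~\ref{prop:linked} — is a reasonable self-contained route (and, interestingly, leverages a result proved only in the present paper). But you assert without argument that condition~(1) guarantees such a leaf exists; this is true but is precisely the combinatorial content of the implication and must be shown. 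The cleanest way is to note that condition~(1) is equivalent to the bipartite incidence graph $B$ on vertex set $\{A_1,\ldots,A_r\}\cup[n]$ with edges $\{A_\ell,j\}$ for $j\in A_\ell$ being a forest, and then to observe that in such a forest some $A_{\ell_0}$ has at most one neighbour $j$ of $B$-degree $\geq 2$.

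The $(2)\Rightarrow(1)$ direction is a genuine gap. You correctly reduce to showing that $\prod_\ell\bigl(1-\prod_{j\in A_\ell}(1-Z_j)\bigr)$ cannot equal $\cK_{I'}(Z)$ for any monomial ideal $I'$, and you correctly suspect that one should track a coefficient built from the cycle labels, but the phrase ``cannot be matched by the Taylor-type resolution shape of any ideal whose generators have exponents bounded by $\mm$'' is a placeholder rather than an argument — and the bound $\mm$ is irrelevant, since $\mm$ is at our disposal. The actual obstruction is in the lowest-degree (degree $r$) part. For any monomial ideal $I'$ with $\cK_{I'}(Z)$ starting in degree $r$, that part equals $\sum_{a\in I',\,|a|=r}Z^a$ and so has coefficients in $\{0,1\}$. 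On the other side, the degree-$r$ part of the product is $\prod_{\ell=1}^r\sum_{j\in A_\ell}Z_j$, and a simple cycle $A_{\ell_1},\ldots,A_{\ell_c}$ in $G(\cA)$ with labels $j_1,\ldots,j_c$ not all equal yields two distinct choice functions (namely $A_{\ell_i}\mapsto j_i$ and $A_{\ell_i}\mapsto j_{i-1}$, indices mod $c$, extended arbitrarily off the cycle) producing the same monomial, hence a coefficient $\geq 2$. Equivalently, one may cite \cite[Proposition~2.6]{CDG4} as the paper itself does in the proof of Lemma~\ref{lem:colrow}: the multidegree of a CS ideal is multiplicity-free, whereas the Koszul resolution shows the multidegree of $S/(f_1,\ldots,f_r)$ is exactly $\prod_\ell\sum_{j\in A_\ell}Z_j$, which fails to be multiplicity-free when (1) does.
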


For our purposes we recall that if $I$ is a CS ideal then all its initial ideals  (in every system  of $\ZZ^n$-graded coordinates) are  generated by elements of $S_A$ for various values of $A\subseteq [n]$. In other words, the generators of the initial ideal of a CS ideal involve at most one variable of degree $e_i$ for each $i\in [n]$.

\section{A base change argument}

In this section we prove Corollary \ref{cor:torvanishing}, which will
be used in Section \ref{section:NewCS} for proving the CS property for
one of our ideal constructions. The corollary is deduced from 
Proposition \ref{SriL}. Since its proof would not benefit from 
specializations, the proposition is formulated in a rather general form.

\begin{proposition} 
\label{SriL}
    Let $R$ be a commutative ring and let $A,B$ be flat algebras over $R$.
 Set $C=A\otimes_R B$, and for a right $A$-module $M$ and a left $B$-module $N$, consider $M\otimes_R B$ as a right $C$-module and $A\otimes_R N$ as a left $C$-module. There is a natural isomorphism
\[
\mathrm{Tor}^R_i(M,N) \cong \mathrm{Tor}^C_i(M\otimes_R B, A\otimes_R N) \text{ for all $i$.}
\]
\end{proposition}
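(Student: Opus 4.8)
The plan is to reduce the statement to a standard change-of-rings spectral sequence, or more directly to exhibit a single resolution that computes both sides. First I would fix a projective resolution $P_\bullet \to M$ of $M$ as a right $R$-module. Since $B$ is flat over $R$, the complex $P_\bullet \otimes_R B$ is a complex of projective right $C$-modules — each $P_i \otimes_R B$ is a direct summand of a free module $R^{(\Lambda)} \otimes_R B = C^{(\Lambda)}$ — and by flatness of $B$ it remains exact, hence $P_\bullet \otimes_R B \to M \otimes_R B$ is a projective resolution over $C$. Therefore $\mathrm{Tor}^C_i(M \otimes_R B, A \otimes_R N)$ is computed by the homology of $(P_\bullet \otimes_R B) \otimes_C (A \otimes_R N)$.

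The key step is then the natural isomorphism of complexes
\[
(P_\bullet \otimes_R B) \otimes_C (A \otimes_R N) \;\cong\; P_\bullet \otimes_R N,
\]
which on each term reads $(P_i \otimes_R B) \otimes_{A \otimes_R B} (A \otimes_R N) \cong P_i \otimes_R N$. This is a purely formal associativity-and-cancellation computation: one checks it first for $P_i = R$, where both sides are $B \otimes_{A\otimes_R B}(A \otimes_R N) \cong N$, then for $P_i$ free by taking direct sums, and then for $P_i$ projective by passing to direct summands; naturality guarantees compatibility with the differentials. Taking homology gives $\mathrm{Tor}^C_i(M \otimes_R B, A \otimes_R N) \cong H_i(P_\bullet \otimes_R N) = \mathrm{Tor}^R_i(M,N)$, and one verifies the isomorphism is natural in $M$ and $N$ by the usual comparison-of-resolutions argument.

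I would not actually need flatness of $A$ for this particular argument — flatness of $B$ alone suffices to make $P_\bullet \otimes_R B$ a $C$-projective resolution — but since the proposition is stated symmetrically one can equally run the argument with a $B$-projective resolution of $N$ and flatness of $A$; it is worth a remark that the hypotheses are slightly redundant. The main obstacle, such as it is, is bookkeeping: making the chain of natural isomorphisms genuinely natural (so that it commutes with the boundary maps of $P_\bullet$) rather than merely an isomorphism term by term. This is handled by noting that every isomorphism used — the cancellation $B \otimes_{A \otimes_R B}(A \otimes_R N) \cong N$, the compatibility with direct sums, and the passage to summands — is functorial in the module-theoretic data, so no ad hoc choices enter and the isomorphism of complexes is automatic.
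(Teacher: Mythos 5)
There is a genuine gap at the very first step, and it propagates through the whole argument. You take a projective resolution $P_\bullet \to M$ of $M$ \emph{as a right $R$-module}, but then $P_i \otimes_R B$ carries only a right $B$-module structure, not a right $C$-module structure: since $P_i$ is merely an $R$-module, there is no right $A$-action on $P_i \otimes_R B$, so it cannot be a $C=A\otimes_R B$-module. Concretely, the identity you invoke, $R^{(\Lambda)} \otimes_R B = C^{(\Lambda)}$, is false; $R^{(\Lambda)} \otimes_R B = B^{(\Lambda)}$, and $B$ is not a $C$-module in general (take $R=K$ a field, $A=K[x]$, $B=K[y]$: then $C=K[x,y]$ and $B=K[y]$ has no $C$-module structure). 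For the same reason your base case $B\otimes_C(A\otimes_R N)\cong N$ does not type-check. The fix is to resolve $M$ by projectives \emph{over $A$}, not over $R$: then $P_i$ is a summand of $A^{(\Lambda)}$, so $P_i\otimes_R B$ is a summand of $C^{(\Lambda)}$, and the cancellation $(P_i\otimes_R B)\otimes_C(A\otimes_R N)\cong P_i\otimes_A(A\otimes_R N)\cong P_i\otimes_R N$ goes through exactly as you describe.

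Once repaired this way, however, flatness of $A$ becomes indispensable, contrary to your closing remark. With $P_\bullet$ an $A$-projective resolution, the identification $H_i(P_\bullet\otimes_R N)\cong\mathrm{Tor}^R_i(M,N)$ requires each $P_i$ to be $R$-flat, which is precisely what flatness of $A$ provides; flatness of $B$ is used separately to keep $P_\bullet\otimes_R B$ exact. Without $A$-flatness the statement itself is false: take $R=\ZZ$, $A=\ZZ/2$, $B=\ZZ$, $M=N=\ZZ/2$. Then $C=\ZZ/2$ is a field, so $\mathrm{Tor}^C_1(M\otimes_R B,A\otimes_R N)=0$, while $\mathrm{Tor}^{\ZZ}_1(\ZZ/2,\ZZ/2)=\ZZ/2$. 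The paper's proof is the symmetric version of your (corrected) idea: it resolves $M$ freely over $A$ by $\fF$ and $N$ freely over $B$ by $\fG$, identifies $\fF\otimes_R\fG$ with $(\fF\otimes_R B)\otimes_C(A\otimes_R\fG)$, and uses both flatness hypotheses to view $\fF$ and $\fG$ as flat $R$-resolutions computing $\mathrm{Tor}^R(M,N)$. Your one-sided route is a legitimate variant once the base ring of the resolution is corrected, but it does not dispense with either flatness hypothesis.
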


\begin{proof}
 Let $\fF \to M$ and $\fG \to N$ be free resolutions of $M$ and $N$ over $A$ and $B$, respectively. 
 Since $A$ and $B$ are flat over $R$, we get that the induced maps
 \[
 \fF\otimes_R B\longrightarrow M\otimes_R B \quad\text{ and }\quad A\otimes_R \fG \longrightarrow A\otimes_R N
 \]
 are free resolutions over $C$. We claim that the natural map

 \begin{center}
 $\begin{array}{ccc}
 \fF \otimes_R \fG & \longrightarrow & (\fF \otimes_R B) \otimes_{C} (A \otimes_R \fG) \\
 (f\otimes g) &  \mapsto & (f\otimes 1)\otimes (1\otimes g)
 \end{array}$
 \end{center}
 
 \noindent is $C$-linear and an isomorphism of complexes. The $C$-linearity is a direct verification. To see that it is an isomorphism, note that since $\fF$ and $\fG$ are free over $A$ and $B$, respectively, it suffices to check that map is an isomorphism when $\fF=A$ and $\fG=B$, and that is clear. Note, that $\fF\to M$ and $\fG\to N$ can be regarded as flat resolutions of $M$ and $N$ as $R$-modules. Thus  one gets isomorphisms
 \begin{align*}
 \mathrm{Tor}^R_i(M,N) 
    &\cong \mathrm{H}_i(\,\fF \otimes_R \fG\,) \\    
    &\cong \mathrm{H}_i\big(\,(\fF \otimes_R B) \otimes_{A\otimes_R B} (A\otimes_R \fG)\,\big) \\
    &\cong \mathrm{Tor}^C_i(\,M\otimes_R B, A\otimes_R B\,)
 \end{align*}
This proves the assertion. 
\end{proof}

As a corollary we have the following result which will become
crucial in the proof of Proposition \ref{prop:linked}. 

\begin{corollary} \label{cor:torvanishing}
Suppose $S=K[\x,\y,\z]$ where $\x,\y,\z$ are disjoint sets of variables with  ${\x}=x_1,\dots, x_p$. 
Let  $I$ and $J$ be ideals of $S$ such that $I$ is extended from $K[\x,\y]$ and $J$ is extended from $K[\x,\z]$. Then 

\[
\mathrm{Tor}^S_i(S/I,S/J)=0 
\]
for all $i>p$. 
In particular, if $p = 1$ then the tensor product of  
free resolutions of $I$ and $J$ yields a free resolution of $IJ$.
\end{corollary}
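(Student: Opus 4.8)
The plan is to apply Proposition \ref{SriL} with a well-chosen base ring. Write $A = K[\x,\y]$ and $B = K[\x,\z]$, both regarded as algebras over $R = K[\x]$; since $\y$ and $\z$ are sets of variables these are polynomial extensions, hence free and in particular flat over $R$. The tensor product $A \otimes_R B$ is then canonically identified with $K[\x,\y,\z] = S$, because tensoring over $K[\x]$ glues the two variable sets along the common block $\x$ without introducing relations. By hypothesis $I$ is extended from $A$, say $I = I_0 S$ with $I_0 \subseteq A$, and similarly $J = J_0 S$ with $J_0 \subseteq B$. Setting $M = A/I_0$ as a right $A$-module and $N = B/J_0$ as a left $B$-module, we have $M \otimes_R B = (A/I_0)\otimes_R B = S/I$ and $A \otimes_R N = S/J$, using flatness of $B$ (resp. $A$) over $R$ to identify the quotient after base change.

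Proposition \ref{SriL} then yields natural isomorphisms
\[
\mathrm{Tor}^S_i(S/I, S/J) \;\cong\; \mathrm{Tor}^{K[\x]}_i(A/I_0,\, B/J_0)\quad\text{for all }i.
\]
Now $K[\x] = K[x_1,\dots,x_p]$ is a polynomial ring in $p$ variables, hence regular of global dimension $p$; therefore $\mathrm{Tor}^{K[\x]}_i$ of any pair of modules vanishes for $i > p$. This gives the vanishing $\mathrm{Tor}^S_i(S/I,S/J) = 0$ for $i > p$, as claimed. For the final assertion, specialize to $p = 1$: then $\mathrm{Tor}^S_i(S/I,S/J) = 0$ for all $i \ge 2$, and in particular $\mathrm{Tor}^S_1(S/I,S/J) = 0$, which is exactly the condition ensuring that the tensor product of a free resolution of $S/I$ with a free resolution of $S/J$ is acyclic, i.e. is a free resolution of $(S/I)\otimes_S(S/J) = S/(I+J)$; the corresponding statement for a resolution of $IJ$ follows since, when $\mathrm{Tor}^S_1(S/I,S/J)=0$, the natural surjection $I\otimes_S J \to IJ$ is an isomorphism and one can assemble the resolution of $IJ$ from those of $I$ and $J$ in the standard way.

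The only real point requiring care is the bookkeeping around base change and extension of ideals: one must check that "$I$ extended from $K[\x,\y]$" together with flatness really does give $S/I \cong (A/I_0)\otimes_R B$ as a right $C$-module compatibly with the module structures in Proposition \ref{SriL}, and similarly on the other side. This is where the flatness hypotheses in the proposition are used, and it is routine but should be spelled out. Everything else — identifying $A\otimes_R B$ with $S$, and invoking $\operatorname{gl.dim} K[\x] = p$ — is immediate.
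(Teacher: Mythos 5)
The reduction via Proposition \ref{SriL} with $R = K[\x]$, $A = K[\x,\y]$, $B = K[\x,\z]$, together with the appeal to the fact that $K[\x]$ has global dimension $p$, is exactly the paper's argument and is correct.

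Your treatment of the $p = 1$ case, however, contains a genuine error. From ``$\mathrm{Tor}^S_i(S/I, S/J) = 0$ for all $i \ge 2$'' you conclude ``in particular $\mathrm{Tor}^S_1(S/I, S/J) = 0$'', which is a non sequitur: vanishing for $i \ge 2$ says nothing about $i = 1$. Indeed $\mathrm{Tor}^S_1(S/I, S/J) \cong (I \cap J)/IJ$ is typically nonzero under the hypotheses (take $I = J = (x_1)$). Consequently your intermediate claim, that the tensor product of free resolutions of $S/I$ and $S/J$ is a resolution of $S/(I+J)$, is false in general and is also not what the corollary asserts: the statement concerns free resolutions of the ideals $I$ and $J$ and the module $IJ$. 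The correct route, which is what the paper does, is to shift degrees. For $i \ge 1$ one has $\mathrm{Tor}^S_i(I, J) \cong \mathrm{Tor}^S_{i+1}(S/I, J) \cong \mathrm{Tor}^S_{i+2}(S/I, S/J)$, and when $p = 1$ this vanishes since $i+2 \ge 3 > 1$; this shows the tensor product of free resolutions of $I$ and $J$ resolves $I \otimes_S J$. Finally, the kernel of the natural surjection $I \otimes_S J \to IJ$ is $\mathrm{Tor}^S_1(S/I, J) \cong \mathrm{Tor}^S_2(S/I, S/J) = 0$ --- note this is $\mathrm{Tor}^S_1(S/I, J)$, not $\mathrm{Tor}^S_1(S/I, S/J)$ as you wrote --- so $I \otimes_S J \cong IJ$ and the asserted resolution of $IJ$ follows.
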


\begin{proof}
Set  $R=K[{\x}],\ A=K[{\x,\y}],\ B=K[{\x,\z}]$ so that $A\otimes_R B=S$ and $A$ and $B$ are flat (actually free) over $R$. Let $I_1$ be the ideal of $A$ with $I_1S=I$ and $J_1$ be the ideal of $B$ with $J_1S=J$. Set $M=A/I_1$ and $N=B/J_1$. We have $M\otimes_R B=S/I$ and , $A\otimes_R N=S/J$. Applying Proposition \ref{SriL} to the present setting we have  

\begin{align*}
\mathrm{Tor}^R_i(A/I_1,B/J_1) & \cong
    \mathrm{Tor}^S_i(S/I,S/J)
 \end{align*}

and $\mathrm{Tor}^R_i(A/I_1,B/J_1)$ vanishes for $i$ larger than the global dimension of $R$ that, by the general version of  Hilbert's syzygy theorem \cite[Theorem 8.37]{Rotman}, is $p$.  

Finally when $p=1$ we have 
$$
\mathrm{Tor}^S_i(S/I, S/J) = 0 \quad \text{ for all } i > 1
$$
hence 
$$
\mathrm{Tor}^S_i(I, J)=\mathrm{Tor}^S_{i+2}(S/I, S/J) = 0  \text{ for all } i > 0.$$
This implies that the tensor product of free resolutions of $ I $ and $ J $ gives a free resolution of $ I \otimes_S J$.
 Now we observe that the kernel of the natural map   $ I \otimes_S J \to IJ$ gets identified with 
 $\mathrm{Tor}^S_1(S/I, J)=\mathrm{Tor}^S_2(S/I, S/J)$ which is $0$. 
That is, $I \otimes_S J \simeq IJ$.
\end{proof}


\section{Constructions preserving the Cartwright-Sturmfels property}
\label{section:NewCS}

First, we show that the CS property is inherited by ideals generated by
all elements of a fixed squarefree $\ZZ^n$-degree.

\begin{lemma} \label{easy1} 
Let $I\subset S$ be a CS ideal and $A\subseteq [n]$.  Let $J$ be the ideal of $S$ generated by the vector space $I_A$.  Then $J$ is CS and $\ini_{>}(J)$ is generated by the monomial vector space $\ini_{>}(I_A)$ for every  term order $>$ of $S$. 
\end{lemma}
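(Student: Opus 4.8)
The plan is to exploit the characterization from Proposition \ref{ginRad}: $I$ is CS iff $\gin(I) \in \Brad(S)$, together with the Hilbert-series characterization of Definition \ref{sameHS}. First I would pick a generic $\ZZ^n$-graded change of coordinates $g$ that works simultaneously for $I$ and for $J$ (possible since the relevant Zariski-open sets are nonempty and their intersection is too); then $\gin(I) = \ini_>(gI)$ and $\gin(J) = \ini_>(gJ)$. Since $J = S \cdot I_A$ and $gJ = S\cdot (gI)_A$, and passing to initial ideals, I expect $\ini_>(gJ)$ to be generated by $\ini_>((gI)_A)$ — this reduces the whole statement to the case where $I$ is already Borel-fixed, i.e.\ $I = \gin(I)$ is a monomial ideal in $\Brad(S)$, provided one checks that taking the degree-$A$ part commutes suitably with taking initial ideals. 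So the first real step is: if $I \in \Brad(S)$ and $J = S \cdot I_A$, show $J \in \Brad(S)$.

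For that monomial case, note $I_A$ is spanned by the squarefree monomials of $\ZZ^n$-degree $A$ lying in $I$ (here I use the observation recalled at the end of Section \ref{section:CS}: generators of a CS initial ideal involve at most one variable of each degree $e_i$, so the degree-$A$ piece of a Borel-fixed radical ideal is spanned by squarefree monomials of shape $A$). Being a subset of a radical monomial ideal generated in a single squarefree multidegree, $J$ is radical. For the Borel-fixed condition (BF): given a monomial $x_{ij}w \in J$, write it as a multiple of some squarefree generator $x_{i_1 1}\cdots \in I_A$; applying the Borel moves of $I$ to exchange $x_{ij}$ for $x_{kj}$ with $k<i$ keeps us inside $I$ and inside multidegree $A$ — hence inside $I_A$ — which shows $x_{kj}w \in J$. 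So $J \in \Brad(S)$, giving $J$ CS in the monomial case, and then the generic-coordinate reduction gives it in general, with $\gin(J)$ equal to the ideal generated by $\gin(I)_A$.

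Finally, for the statement about an \emph{arbitrary} term order $>$ (not just the gin), I would argue directly: for any $f \in I_A$, $\ini_>(f)$ is a monomial of $\ZZ^n$-degree $A$, so $\ini_>(I_A)$ is a vector space of monomials of degree $A$, and the ideal it generates is clearly contained in $\ini_>(J)$. For the reverse inclusion, take a homogeneous $h \in J$; write $h = \sum_\ell g_\ell f_\ell$ with $f_\ell \in I_A$ and $g_\ell$ homogeneous, and use a standard leading-term / Gröbner-basis argument: a $K$-basis of $I_A$ whose leading monomials are distinct forms (after clearing) a ``Gröbner-like'' generating set, so $\ini_>(h)$ is divisible by $\ini_>(f_\ell)$ for some $\ell$. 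The cleanest route is to first establish $\HS_J = \HS_{(\text{ideal gen.\ by }\ini_>(I_A))}$ using the CS machinery, and then conclude equality of the two monomial ideals from the already-known containment plus equality of Hilbert series.

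The main obstacle I anticipate is the commutation claim ``$\ini_>(J)$ is generated by $\ini_>(I_A)$'': one must be careful that a generic change of coordinates good for $I$ is also good for $J$, and that taking the multigraded strand of degree $A$ does not interact badly with the initial-ideal operation. I expect this to follow from the flatness of Gröbner degeneration restricted to a fixed $\ZZ^n$-graded strand (the Hilbert function in each multidegree is preserved), but spelling it out carefully — including why a basis of $I_A$ in generic coordinates behaves like part of a reduced Gröbner basis — is the delicate point.
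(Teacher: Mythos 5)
Your plan identifies the right technical crux---the commutation claim ``$\ini_>(J)$ is generated by $\ini_>(I_A)$''---but neither of the two routes you sketch actually closes it. First, the assertion that a $K$-basis of $I_A$ with distinct leading monomials behaves ``like part of a reduced Gr\"obner basis,'' so that $\ini_>(h)$ for $h\in J$ is divisible by some $\ini_>(f_\ell)$, is false in general: distinct leading terms do not prevent $S$-pair cancellations from producing new leading monomials, and this is precisely what a Gr\"obner-basis argument must rule out. Second, the suggested ``cleanest route'' of establishing $\HS_J=\HS_{(\ini_>(I_A))}$ and then invoking the containment $(\ini_>(I_A))\subseteq\ini_>(J)$ is circular: the standard equality $\HS_J=\HS_{\ini_>(J)}$ only transfers to $(\ini_>(I_A))$ once one already knows $\ini_>(J)=(\ini_>(I_A))$. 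Similarly, the reduction to the Borel-fixed case requires one to know that passing to generic coordinates commutes with the operation $I\mapsto(I_A)$ at the level of initial ideals, which is again the content of the lemma. (The Borel-fixed step itself---that $(\gin(I)_A)\in\Brad(S)$---is a correct and easy verification, but it is not where the difficulty lies.)

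The paper's proof is a short direct argument that sidesteps all of this. Take a minimal monomial generator $u$ of $\ini_>(J)$. Since $J=S\cdot I_A$ is extended from the subring on the variables of degree $e_j$, $j\in A$, the monomial $u$ has $\ZZ^n$-degree supported exactly on $A$, say $\deg u=\sum_{i\in A}a_ie_i$ with all $a_i>0$. Because $J\subseteq I$ and $I$ is CS, the ideal $\ini_>(I)$ is generated by monomials of squarefree $\ZZ^n$-degree, so some generator $\ini_>(f)$ with $f\in I_B$ and $B\subseteq A$ divides $u$. Now choose a monomial $v$ of $\ZZ^n$-degree $A\setminus B$ dividing $u$ (possible since $a_i\ge 1$ for all $i\in A$). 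Then $vf\in I_A$ and $\ini_>(vf)=v\,\ini_>(f)$ divides $u$, so $u\in(\ini_>(I_A))$. This shows $\ini_>(J)=(\ini_>(I_A))$, hence $\ini_>(J)$ is generated by squarefree monomials of degree $A$ and is radical for every term order; applying this in generic coordinates gives that $\gin(J)\in\Brad(S)$, i.e.\ $J$ is CS. The key move you were missing is to exploit that the ambient CS ideal $I$ already has a squarefree-multidegree-generated initial ideal, and to ``promote'' the witnessing generator $f\in I_B$ to the degree-$A$ element $vf\in I_A$ by multiplying with a suitable monomial.
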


\begin{proof}
  Let $u$ be a monomial from the minimal monomial 
  generating set of an initial ideal of $J$.  
  By definition $J$ is generated by polynomials which are linear combinations of monomials of
  $\ZZ^n$-degree $A$. It follows that $\deg u=\sum_{i\in A} a_i\,e_i $ for integers $a_i >0$. Since $J\subset I$ and $I$ is CS  
  there exists a subset $B$ of $A$ and $f\in I_B$ such that $\ini_{>}(f) | u$. Let $v$ be a monomial of degree $A\setminus B$
  such that $v|u$.
  Then $v\,f\in I_A$ and $\ini_{>}(v\,f)=v\,\ini_{>}(f) | u$. 
\end{proof} 

We use the preceding lemma to give a criterion when the CS property is
inherited by ideals which are generated by all polynomials from a set of squarefree
$\ZZ^n$-degrees in a CS ideal.

\begin{proposition}
\label{easy2} Let $\mathcal{F}\subseteq 2^{[n]}$ such that:
\begin{align} \label{eq:union}
    \text{ if } A,B\in \mathcal{F}  \text{ and } A\cap B\neq \emptyset \text{ then } A\cup B\in \mathcal{F}. 
    \end{align}
    Let $I$ be a CS ideal and  let $J$ be the ideal generated by the vector spaces $I_A$ with $A\in \mathcal{F}$. Then $J$ is CS. More precisely, for every term order $>$ the initial ideal  
$\ini_{>}(J)$ is generated  by the monomial vector spaces $\ini_{>}(I_A)$ with $A\in \mathcal{F}$. 
\end{proposition}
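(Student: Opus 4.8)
The plan is to reduce Proposition \ref{easy2} to iterated applications of Lemma \ref{easy1} together with the sum statement. First I would set $J_A$ for the ideal generated by the vector space $I_A$, so that $J = \sum_{A \in \mathcal{F}} J_A$; by Lemma \ref{easy1} each $J_A$ is CS and its initial ideal (in any term order $>$) is generated by the monomial vector space $\ini_>(I_A)$. The key point is to establish the two claims simultaneously, namely that $J$ is CS and that $\ini_>(J) = \sum_{A\in\mathcal{F}} \ini_>(I_A) S$, and for this I would actually work with $\ini_>(I)$ directly: since $I$ is CS, its initial ideal $\ini_>(I)$ is a monomial ideal all of whose minimal generators have \emph{squarefree} $\ZZ^n$-degree (each involves at most one variable of degree $e_i$), and $\ini_>(I_A) = (\ini_>(I))_A$ for squarefree $A$.

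The main structural input is the hypothesis \eqref{eq:union}: the family $\mathcal{F}$ is closed under union of overlapping sets. I would argue that $\ini_>(J)$ is the monomial ideal generated by all squarefree-degree monomials of $\ini_>(I)$ whose degree-support lies in $\mathcal{F}$. The inclusion $\supseteq$ is immediate from the argument in Lemma \ref{easy1} (a monomial $u$ of degree $A\in\mathcal F$ in $\ini_>(I)$ is divisible by $\ini_>(f)$ for some $f\in I_B$, $B\subseteq A$, and multiplying $f$ by a monomial of degree $A\setminus B$ lands in $I_A$). For the reverse inclusion, I would take a minimal generator $u$ of $\ini_>(J)$; as in Lemma \ref{easy1}, $\deg u = \sum_{i\in C}a_i e_i$ for some $C \subseteq [n]$ with all $a_i>0$, and $u$ arises as $\ini_>(g)$ for some $g = \sum_{A\in\mathcal F} v_A f_A$ with $f_A \in I_A$, $v_A$ a monomial. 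The leading term of $g$ is the leading term of some $v_A f_A$, hence $\ini_>(v_A f_A) = v_A \ini_>(f_A) \mid u$, which forces $A \subseteq C$. The issue is that $C$ itself need not be in $\mathcal{F}$; but $u$ lies in $\ini_>(J)$, so it is divisible by a product of such leading terms, and here is where closure under overlapping unions is used: any monomial in $\ini_>(J)$ of squarefree degree $C$ that is a minimal generator must be a single $v_A\ini_>(f_A)$ with $A \in \mathcal{F}$ — because if $u$ were a proper multiple $u = u' \cdot \ini_>(v_A f_A)$ with $u'$ a nontrivial monomial, minimality is violated, so in fact $u = \ini_>(v_Af_A)$ and $\deg v_A f_A = A$, giving $C = A \in \mathcal{F}$.

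Once the initial ideal is identified, I would finish by a Hilbert-function count: $\ini_>(J)$ is squarefree-degree generated, hence (after checking it is Borel-fixed, which follows because it is the sum of the Borel-fixed ideals $\ini_>(gI_A)$ generated in squarefree degrees, for generic $g$) it lies in $\Brad(S)$, and by Proposition \ref{ginRad} applied with $\gin$ in place of $\ini_>$ one concludes $J$ is CS. Concretely: replacing $I$ by $gI$ for generic $g$, one has $\gin(J) \subseteq \ini_>(gJ)$, and the displayed description shows $\ini_>(gJ)$ is radical and Borel-fixed; since a CS ideal's gin equals its initial ideal in any admissible order and radical Borel-fixed ideals have no embedded structure, $\gin(J) = \ini_>(gJ) \in \Brad(S)$, so $J$ is CS. The main obstacle I anticipate is the bookkeeping in the reverse inclusion above — precisely the step where one must rule out that a minimal generator of $\ini_>(J)$ has degree-support outside $\mathcal{F}$ — and making sure the genericity of coordinates is used consistently so that the initial ideal statement (asserted for \emph{every} term order, in the original coordinates) and the CS conclusion (a statement about $\gin$) are both obtained.
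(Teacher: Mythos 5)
The central step of your argument---identifying $\ini_>(J)$ with $\sum_{A\in\mathcal F}\ini_>(I_A)S$---contains a genuine gap, and as a symptom, the closure hypothesis \eqref{eq:union} is never effectively used. You write $u=\ini_>(g)$ with $g=\sum_{A\in\mathcal F}v_Af_A$ and assert that ``the leading term of $g$ is the leading term of some $v_Af_A$.'' This is exactly what one must prove and it is false in general: the leading terms of several $v_Af_A$ can cancel in the sum, so $\ini_>(g)$ need not divide, or be divisible by, any $\ini_>(v_Af_A)$. (There is also a smaller slip later: from $u=\ini_>(v_Af_A)$ you conclude ``$\deg v_Af_A=A$, giving $C=A$,'' but $\deg(v_Af_A)=\deg v_A+\sum_{j\in A}e_j$, which is not squarefree in general; fortunately you only need divisibility by $\ini_>(f_A)$, not $C=A$.) The spot where you announce ``here is where closure under overlapping unions is used'' contains no actual appeal to \eqref{eq:union}, only a (correct but irrelevant) observation about minimal generators.

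The paper's proof supplies precisely the missing structural step. Fix $f\in J$ of degree $a$ with support $E_f$, and let $A_1,\dots,A_s$ be the inclusionwise maximal elements of $\{B\in\mathcal F:B\subseteq E_f\}$. The hypothesis \eqref{eq:union} forces these maximal sets to be pairwise \emph{disjoint}: if $A_i\cap A_j\neq\emptyset$ then $A_i\cup A_j\in\mathcal F$ and $A_i\cup A_j\subseteq E_f$, contradicting maximality. Writing $U_i$ for the ideal generated by $I_{A_i}$, one checks $f\in\sum_i U_i$, and the $U_i$ are generated by polynomials in \emph{disjoint} sets of variables. Now the Buchberger criterion (S-pairs across disjoint variable blocks reduce to zero) gives $\ini_>\!\bigl(\sum_iU_i\bigr)=\sum_i\ini_>(U_i)$, so $\ini_>(f)\in\ini_>(U_i)$ for some $i$; combined with Lemma \ref{easy1} this identifies $\ini_>(J)$ as claimed, and since the argument is valid in generic coordinates, $\gin(J)$ is radical and $J$ is CS. In short: the pairwise-disjointness of the maximal $A_i$'s, together with Buchberger's criterion, is what rules out leading-term cancellation. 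Your plan is recoverable once you insert this decomposition; without it the crucial ``no cancellation'' claim is unsupported.
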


\begin{proof} Let $>$ be a term order. Let $f\in J$ be of 
$\ZZ^n$-degree
$a = \sum_{i=1}^ n a_i\, e_i\in \ZZ^n$ and set $E_f=\{\, i\, :\, a_i>0\,\}$. Let $A_1,\dots,A_s$ be the inclusionwise maximal elements of $\{ \,B \in \mathcal F\,:\,B \subseteq E_f\,\}$ and let $U_i$ be the ideal generated by the vector space $I_{A_i}$ for $1 \leq i \leq s$. 
 It follows that $f\in \sum_{i=1}^s U_i$. If $A_i \cap A_j \neq \emptyset$ for some $1 \leq i < j \leq s$ then by \eqref{eq:union} $A_i \cup A_j \in \mathcal F$
 and $A_i \cup A_j \subseteq E_f$ contradicting their maximality. 
 Hence the $A_1,\ldots, A_s$ are pairwise disjoint. 
 As a consequence, the ideals $U_i$ are generated by polynomials in disjoint sets of variables. The Buchberger criterion then implies
 that $\ini_{>}(f)\in \ini_{>}(U_i)$ for some $i$. By Lemma \ref{easy1} we have that $\ini_{>}(U_i)$ is generated by the vector space $\ini_{>}\big(\,I_{A_i}\,\big)$. We conclude that $\ini_{>}(f)$ belongs to the ideal generated by $\ini_{>}(I_{A_i})$. Hence $\ini_{>}(J)$ is radical. Since this holds also in generic coordinates, 
 the proof is complete. 
\end{proof}   

For a $\ZZ^n$-ideal $J$ generated in $\ZZ^n$ degrees
$\leq e_1+\ldots +e_\ell$ and a $\ZZ^n$-graded  ideal 
$H$ generated in degrees
$\leq e_{\ell+1}+\cdots +e_n$ it is easy to see that 
if $J$ and $H$ are CS then so is $J+H$. 
Indeed, the following proposition shows that a mild overlap
of degrees still allows the same conclusion.

  \begin{proposition} 
   \label{prop:linked} 
   Let $I$ and $J$ be CS ideals 
  such that for some $1 \leq \ell \leq n$ the ideal $I$ 
  is generated in degrees $\leq e_1 + \cdots + e_\ell$ 
  and $J$ is generated in degrees $\leq e_\ell + \cdots + e_n$. 
  Then $I+J$ is CS.
  \end{proposition}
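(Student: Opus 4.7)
The plan is to show that $\gin(I+J)\in\Brad(S)$, which by Proposition~\ref{ginRad} yields that $I+J$ is CS. Since the generators of $I$ have $\ZZ^n$-degrees contained in $[1,\ell]$, $I$ is extended from an ideal $I'$ of $A:=K[\y,\x]$, where $\x$ denotes the variables of block $\ell$ and $\y$ those of blocks $1,\dots,\ell-1$; likewise $J$ is extended from $J'\subset B:=K[\x,\z]$, with $\z$ the variables of blocks $\ell+1,\dots,n$. Then $R:=K[\x]=A\cap B$ and $S=A\otimes_R B$, which places us in the setting of Proposition~\ref{SriL} and Corollary~\ref{cor:torvanishing}.

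For a generic coordinate change $g\in G$ I would choose reduced $\ZZ^n$-graded Gr\"obner bases $\mathcal G_I\subset A$ of $gI$ and $\mathcal G_J\subset B$ of $gJ$. Because $I$ and $J$ are CS, their initial monomials are squarefree, of multidegrees $\subseteq[1,\ell]$ and $\subseteq[\ell,n]$ respectively. The aim is to show that every minimal generator of $\gin(I+J)=\ini_{>}(gI+gJ)$ has squarefree multidegree; together with Borel-fixedness (automatic for gins), this places $\gin(I+J)$ in $\Brad(S)$. For a mixed $S$-pair $S(f,g)$ with $f\in\mathcal G_I$ of multidegree $A$ and $g\in\mathcal G_J$ of multidegree $B$, the supports of $\ini f$ and $\ini g$ meet at most in block $\ell$. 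Either $\gcd(\ini f,\ini g)=1$---which happens when $\ell\notin A\cap B$ or when the block-$\ell$ variables in the two initial monomials disagree---in which case $S(f,g)\to 0$ by Buchberger's first criterion; or $\gcd(\ini f,\ini g)=x_{a,\ell}$ for a single $a$, and then $\mathrm{lcm}(\ini f,\ini g)$ has the squarefree multidegree $A\cup B\subseteq[n]$, so $S(f,g)$ is $\ZZ^n$-homogeneous in that squarefree multidegree.

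The hard part will be to control subsequent iterations of Buchberger's algorithm: $S$-pairs involving newly adjoined elements may have $\mathrm{lcm}$s with $e_\ell$-multiplicity $\ge 2$ coming from cancellations in the pure block-$\ell$ component, and these need not fall under the first criterion. This is where Corollary~\ref{cor:torvanishing} becomes decisive: through Proposition~\ref{SriL} it identifies $\mathrm{Tor}^S_i(S/I,S/J)$ with $\mathrm{Tor}^R_i(A/I',B/J')$, which vanishes for $i>m_\ell$. The Euler-characteristic identity
\[
\cK_{S/I}(Z)\cdot \cK_{S/J}(Z)=\sum_{i\ge 0}(-1)^i\,\cK_{\mathrm{Tor}^S_i(S/I,S/J)}(Z),
\]
combined with $\cK_{\mathrm{Tor}^S_0(S/I,S/J)}=\cK_{S/(I+J)}$, then pins down the $\ZZ^n$-graded Hilbert series of $S/(I+J)$ from finitely many contributions. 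The cleanest way to conclude is to construct, via the bijection $\psi:\M(T,\mm)\to\Brad(S)$ of Proposition~\ref{JandE}, a monomial ideal $E_{I+J}\in\M(T,\mm)$ whose $\cK$-polynomial under the substitution $Z\mapsto 1-Z$ matches the expression for $\cK_{S/(I+J)}$ obtained above; the corresponding $K^\ast:=\psi(E_{I+J})\in\Brad(S)$ will then satisfy $\HS_{K^\ast}=\HS_{I+J}$, forcing $\gin(I+J)=K^\ast$. I expect the main obstacle to be exactly this combinatorial identification: extracting from Corollary~\ref{cor:torvanishing} a sharp enough description of the higher $\mathrm{Tor}^R_i$ to assemble $E_{I+J}$ explicitly from the combinatorial data of $\gin(I)$, $\gin(J)$ and their block-$\ell$ interaction.
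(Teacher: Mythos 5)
Your proposal identifies the right toolkit (Proposition~\ref{SriL}, Corollary~\ref{cor:torvanishing}, and the $\cK$-polynomial bijection of Proposition~\ref{JandE}) but stops short of a proof, with two explicitly acknowledged gaps that are in fact the substance of the argument. The Gr\"obner-basis route fails exactly where you note: you have no control over the $e_\ell$-multiplicity of lcm's appearing in later Buchberger iterations, and Buchberger's first criterion only handles the initial mixed pairs. The Euler-characteristic route, as you set it up, applies Corollary~\ref{cor:torvanishing} to $I$ and $J$ inside $S$ with $p = m_\ell$, which only kills $\mathrm{Tor}_i^S(S/I,S/J)$ for $i > m_\ell$; the alternating sum then still carries up to $m_\ell+1$ unknown terms, and ``pinning down'' the Hilbert series from these is no easier than the original problem. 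Neither route is carried to completion.

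The missing idea is a preliminary reduction to disjoint variables. Introduce a duplicate $x'$ of every block-$\ell$ variable, form $I'$ from the generators of $I$ and $J'$ from the generators of $J$ with the block-$\ell$ variables replaced by their duplicates, so that $I'$ and $J'$ live in disjoint blocks; then $I+J$ is recovered by specializing $x'\mapsto x$, a $\ZZ^n$-graded linear section which preserves CS by Proposition~\ref{sections}. After this reduction all higher $\mathrm{Tor}$ vanish, so $\cK_{S/(I+J)}(Z)=\cK_{S/I}(Z)\cdot\cK_{S/J}(Z)$ on the nose, with no correction terms. Corollary~\ref{cor:torvanishing} is then applied not to $I,J$ in $S$ but to the monomial ideals $I',J'\in\M(T,\mm)$ that correspond to $\gin(I)$ and $\gin(J)$ under $\psi$; since at most the single variable $y_\ell$ appears in both, one has $p=1$, so $\mathrm{Tor}_i^T(T/I',T/J')=0$ for $i>1$ and $\cK_{I'J'}=\cK_{I'}\cdot\cK_{J'}$. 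The product $I'J'$ is the explicit ideal $E_{I+J}$ you were searching for, and Proposition~\ref{JandE} closes the argument. Without the duplication step and the shift from $S$ to the one-variable-overlap situation in $T$, the combinatorial identification you flag as ``the main obstacle'' does not resolve.
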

\begin{proof}
We first reduce to the case when there are minimal generating sets of $I$ and $J$ that use different sets of variables. 

For the reduction we consider the polynomial ring $S'$ which has the same set of variables as $S$ except that for each variable $x$ 
of degree $e_\ell$ we have a duplicate $x'$ also of degree $e_\ell$. In $S'$ we consider
the ideal $I'$ generated by the generators of
$I$ and $J'$ generated be the generators of $J$ where in each generator the variables
$x$ of degree $e_\ell$ have been replaced by
their duplicates $x'$. It follows that 
$I'$ is generated in $\ZZ^n$-degree $\leq e_1+\cdots +e_\ell$, $J'$ is generated in 
$\ZZ^ n$-degrees $\leq e_\ell+\cdots +e_n$ and
$I'$ and $J'$ are generated by polynomials in 
disjoint sets of variables.

Since $I$ and $J$ are
CS in $S$ so are $I'$ and $J'$ in $S'$. 
The ideal $I+J$ in $S$ arises from 
specializing $x'$ to $x$ for any variable $x'$
in $S'$ of degree $e_\ell$. Since by Proposition \ref{sections} the CS property is preserved under $\ZZ^n$-graded specializations, the
assertions follows if $I'+J'$ is CS.

From now on we can assume that $I$ and $J$ 
are generated by polynomials in disjoint sets of variables. It follows that a minimal free
resolution of
$S/I+J$ can be obtained from the tensor product of a minimal free resolution of $S/I$ with a minimal free resolution of $S/J$. Hence the $\cK$-polynomial of $S/I+J$ is the product $\cK_{S/I}(Z)\,\cK_ {S/J}(Z)$.  
Therefore, by Proposition \ref{JandE} it is enough to prove that $\cK_{S/I}(1-Z)\,\cK_{S/J}(1-Z)$  is the $\cK$-polynomial of a monomial ideal in  $T=K[y_1,\ldots,y_n]$, that is, of an ideal in $\M(T,\mm)$.
Since $I$ and $J$ are CS we know that there are monomial ideals $I'$ and $J'$ in  $T$ such that $\cK_{S/I}(1-Z)$ is the $\cK$-polynomial of $I'$  and  $\cK_{S/J}(1-Z)$ is the 
$\cK$-polynomial of $J'$. 
Furthermore, by the assumption on $I$ and $J$, we have that at most the variable $y_\ell$
appears in the minimal monomial generating sets of both $I'$ and $J'$.  By Corollary \ref{cor:torvanishing}
we have that $\mathrm{Tor}_i^T(T/I',T/J')=0$ for $i>1$ 
and a minimal free resolution of the product of  $I'J'$ can be obtained by the tensor product of a minimal free
resolution of $I'$ with a minimal free resolution of $J'$. It follows that the $\cK$-polynomial of $I'J'$ is $\cK_{S/I}(1-Z)\,\cK_{S/J}(1-Z)$.
\end{proof}
   
   Proposition \ref{prop:obstruct} for $\ell = 2$ and $t \geq 3$ (see also
Figure \ref{fig:obs}) shows that the conditions of the Proposition \ref{prop:linked} cannot be weakened to allow 
overlaps of squarefree $\ZZ^ n$-degrees of size $2$ or larger. 


\section{Generalized binomial edge ideals and higher-order minors}
\label{Gen}
In this section, we specialize to the situation when 
 $S=K[X]$, with $X$ a generic matrix of size $m\times n$
 with the $\ZZ^n$-graded structure given by $\deg x_{ij} = e_j$ for every $j=1,\ldots, n$. We recall that our standard assumption on the term order is that 
 $x_{ij}>x_{kj}$ if $i<k$. 
 As mentioned before, the ideal $I_2(X)$ generated by the 
 $2$-minors of $X$ is known
 to be CS, see \cite[Theorem 2.1]{CS}. In particular, for every $A\subseteq  [n]$
 the degree $A$ component of the $\ZZ^n$-graded generic initial ideal of $I_2(X)$ is generated by the monomials

\begin{equation} 
\label{gin}
 \prod_{j\in A} x_{i_jj} 
 \end{equation}  
 
 such that 
 
 \begin{equation} 
\label{cond} 
   i_j\in [m]   \mbox{ for every } j\in A \mbox{  and  } \sum_{j\in A} i_j\leq m(|A|-1).
  \end{equation}   
 
 In a next step we confine the generating set of the ideal to
 minors of columns of $X$ selected from the edge set of a graph. 
 More precisely, an (undirected simple) graph $G = ([n],E)$ is a pair 
 of a vertex set $[n]$ and an edge set $E$ consisting of $2$-element
 subsets of $[n]$. 
 
 The generalized binomial edge ideal $I_G(m)$ is then defined as the 
 sum of the ideals generated by the $2$-minors of the columns
 $j,k$ of $X$ as $\{j,k\}$ varies in $E$.  Note that $I_G(2)$ is the (ordinary) binomial edge ideal which is CS by virtue of \cite{CDG4,CDG5}. 
 Rauh proved in \cite{Rauh} that $I_G(m)$ is radical for every $m$ by showing that it has a square-free initial ideal. 
 
 For $A \subseteq [n]$ we write
 $G_A$ for the graph $(A,E_A)$ induced on $A$ with edge set 
 $E_A = \{ e \in E~|~e \subseteq A\}$.
 We prove that: 
 
 \begin{theorem} 
 \label{main} Let $G = ([n],E)$ be a graph. 
 Then the generalized binomial edge ideal $I_G(m)$ is CS for all $m$. 
 More precisely, let $>$ be a term order satisfying
 $$x_{1j}>x_{2j} > \cdots > x_{mj} \text{ for } j \in [n].$$
 
 Then  the $\ZZ^n$-graded  generic initial ideal of $I_G(m)$ is generated by the monomials \eqref{gin} for the subsets $A \subseteq[n]$
 such that $G_A$ is connected and \eqref{cond} is satisfied.
\end{theorem}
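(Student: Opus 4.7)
The plan is to apply Proposition~\ref{easy2} to the ideal $I_2(X)$ with the family
\[\mathcal{F}=\{A\subseteq[n]\mid G_A\text{ is connected}\},\]
and then identify the resulting ideal with $I_G(m)$. First I would verify the union condition \eqref{eq:union} for $\mathcal{F}$: if $A,B\in\mathcal{F}$ and $v\in A\cap B$, every vertex in $A\cup B$ is connected to $v$ through $G_A$ or $G_B$, so $G_{A\cup B}$ is connected. Since $I_2(X)$ is CS by \cite[Theorem~2.1]{CS}, Proposition~\ref{easy2} produces a CS ideal $J$, generated by the vector spaces $(I_2(X))_A$ for $A\in\mathcal{F}$, whose $\ZZ^n$-graded generic initial ideal is generated in each multidegree $A\in\mathcal{F}$ by the monomials \eqref{gin} satisfying \eqref{cond}. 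This is precisely the description claimed for $\gin(I_G(m))$ in Theorem~\ref{main}, so the theorem reduces to showing that $J=I_G(m)$.

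The inclusion $I_G(m)\subseteq J$ is immediate, since for every edge $\{i,j\}\in E$ we have $\{i,j\}\in\mathcal{F}$ and the $2$-minors generating $I_2(X_{\{i,j\}})$ lie in $(I_2(X))_{\{i,j\}}$. The main obstacle is the reverse inclusion $(I_2(X))_A\subseteq I_G(m)$ for $A\in\mathcal{F}$. Since $(I_2(X))_A$ is spanned by products $(x_{ri}x_{sj}-x_{si}x_{rj})\cdot u$, with $\{i,j\}\subseteq A$, $r,s\in[m]$, and $u$ a squarefree monomial of multidegree $A\setminus\{i,j\}$, it is enough to show that each such product lies in $I_G(m)$.

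I would prove this by induction on the graph distance $d=d_{G_A}(i,j)$. The base case $d=1$ is trivial. For $d\geq 2$, choose a vertex $k$ internal to a shortest $i$--$j$ path in $G_A$, so that $d_{G_A}(i,k)<d$ and $d_{G_A}(k,j)<d$. Write $u=x_{tk}\,u'$, where $x_{tk}$ is the unique variable of column $k$ appearing in $u$ and $u'$ has multidegree $A\setminus\{i,j,k\}$. Expanding along row $t$ of the $3\times 3$ determinant on rows $r,s,t$ and columns $i,k,j$ yields
\[
x_{tk}(x_{ri}x_{sj}-x_{si}x_{rj})=x_{ti}(x_{rk}x_{sj}-x_{sk}x_{rj})+x_{tj}(x_{ri}x_{sk}-x_{si}x_{rk})-D,
\]
where $D$ is the $3\times 3$ minor in question. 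After multiplying through by $u'$, the first two summands become $2$-minors on columns $\{k,j\}$ and $\{i,k\}$, respectively, multiplied by squarefree monomials of multidegrees $A\setminus\{k,j\}$ and $A\setminus\{i,k\}$; by the inductive hypothesis these lie in $I_G(m)$, since $d_{G_A}(k,j)<d$ and $d_{G_A}(i,k)<d$. To control $D\cdot u'$, I would expand $D$ along its first column to express it as a linear combination of products $x_{qi}\cdot m'$ with $q\in\{r,s,t\}$ and each $m'$ a $2$-minor on columns $\{k,j\}$; multiplying by $u'$, each term becomes a $2$-minor on $\{k,j\}$ times a squarefree monomial of multidegree $A\setminus\{k,j\}$, which again lies in $I_G(m)$ by the inductive hypothesis.

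The technical heart of the proof is this inductive step: the correct choice of bridging vertex $k$, together with the secondary column expansion of the $3\times 3$ minor, ensures that both reductions stay within the inductive hypothesis. Once $J=I_G(m)$ is proved, Theorem~\ref{main} follows at once from Proposition~\ref{easy2} and the description of $\gin(I_2(X))$ recalled in \eqref{gin}--\eqref{cond}.
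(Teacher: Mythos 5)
Your proposal is correct and follows essentially the same strategy as the paper: verify the union condition for $\mathcal{F}=\{A:G_A\text{ connected}\}$, apply Proposition~\ref{easy2} to $I_2(X)$, and then establish $J=I_G(m)$ by induction along a path in $G_A$ using a Laplace-type identity to trade a $2$-minor on columns $\{i,j\}$ times a column-$k$ variable for $2$-minors on $\{i,k\}$ and $\{k,j\}$. Your explicit $3\times 3$ determinant expansion (valid even when rows repeat, since then $D=0$) is precisely the ``double Laplace expansion'' the paper invokes to obtain the inclusion \eqref{expansion}; the paper merely phrases the induction at the level of the vector spaces $C_{jk}$ and $W_{j,k}$ and uses path length $s$ in place of your graph distance $d$, which is an equivalent bookkeeping choice.
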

\begin{proof} 
Set   $\mathcal F=\big\{\, A\subseteq [n]\, :\, G_A \text{ is connected }\,\}$ and  $I=I_2(X)$.  
If $A,B \in \mathcal F$ are
such that $A \cap B \neq \emptyset$ then $G_A$ and $G_B$ are two
connected graphs sharing at least one vertex. Thus 
$G_A \cup G_B = (A \cup B, E_A \cup E_B)$ is connected. Since
$E_A \cup E_B \subseteq E_{A \cup B}$ it follows that
$G_{A \cup B} = (A \cup B,E_{A\cup B})$ is connected and
$A \cup B \in \mathcal F$. In particular, $\mathcal F$
satisfies the assumptions of Proposition \ref{easy2}.

Let $J$ be the ideal generated by the vector spaces $I_A$ with $A\in \mathcal F$. By Proposition \ref{easy2} we know that $J$ is CS and 
its generic initial ideal is generated by the monomials \eqref{gin}
which satisfy \eqref{cond}. Therefore, it is enough to prove that $J=I_G(m)$. 

For the inclusion $J\supseteq I_G(m)$ observe that every edge $\{j,k\}$ of $G$ we have  $\{j,k\}\in \mathcal F$ and hence the generators of $I_G(m)$ are contained in $J$.  

For the other inclusion, we first observe the following. 
Denote by $C_{jk}$ the vector space  generated by the $2$-minors of $X$ using only columns $j$ and $k$ and by $C_\ell$ the vector space generated by the entries of column $\ell$. By double Laplace expansion one sees immediately that 

\begin{equation}
\label{expansion} 
 C_{jk} C_{\ell} \subseteq C_{\ell k} C_{j}  +  C_{j \ell} C_{k} .
\end{equation} 

Now let $A\in \FF$. We have to show that $I_A\subset I_G(m)$.   Since  $I_A$ is the sum of the vector spaces 
$$W_{j,k}:=C_{jk}\prod_{\ell\in A\setminus\{j,k\} }C_\ell$$ 
where $ \{j,k\}\subset A$, it is enough to show that $W_{j,k}\subseteq I_G(m)$.  As $G_A$ is connected, there exists a path $j=p_0,p_1,\dots, p_s=k$ in $G$ with $p_0,\ldots,p_s\in A$. 
We prove $W_{j,k} \subseteq I_G(m)$ by induction on $s$. 
If $s=1$ then $C_{jk} \subset I_G(m)$ and so $W_{jk} \subset  I_G(m)$. If $s>1$ then
by \eqref{expansion} we have
$C_{jk}C_{p_1}\subseteq C_{jp_1}C_{k}+C_{p_1k}C_{a}$. 
Thus  
$W_{jk} \subset W_{j,p_1}+W_{p_1,k}$. By induction on $s$, we may assume that both $W_{j,p_1}$ and $W_{p_1,k}$ are contained in $I_G(m)$ so that we can conclude that 
$W_{j,k}\subset   I_G(m)$. 
  \end{proof}  

In a next step we turn to higher-order minors. 
Let $s \geq 2$. An $s$-uniform hypergraph $H = ([n],E)$ is given by a vertex set $[n]$
and an edge set $E$ consisting of $s$-element subsets of $[n]$.
Note that $2$-uniform hypergraphs and graphs are the same concept.
Analogous to the graph case, for $A \subseteq [n]$, we write $H_A=(A,E_A)$ for the induced hypergraph on vertex set $A$
with edge set $E_A = \{\,e\in E\,|\, e \subseteq [n]\,\}$.
By $I_H(m)$ we denote the ideal of $s$-minors of $X$ with
columns $1 \leq j_1 < \cdots < j_s \leq n$ such that
$\{j_1,\ldots, j_s\} \in E$. 

If $E$ is the set of all $s$-subsets of $[n]$ then it was shown
in \cite[Theorem 2.2, Theorem 3.1]{CDG3} that $I_H(m)$ is CS if $s = m$ and $m \leq n$ or if $n=s$ and $s \leq m$; i.e., the cases where $I_H(m)$  is an ideal of maximal minors.

On the other hand for $s > 2$ a statement analogous to Theorem \ref{main} is false and a classification
of the hypergraphs $H$ for which $I_H(m)$ is CS seems to be out of reach. The main obstacle why for $s > 2$ we cannot pursue the
same strategy as in Theorem \ref{main} is the lack of a proper
replacement of \eqref{expansion} for higher minors.

We can still prove positive results, consider for example the following 
hypergraphs. The hypergraph $([n],\binom{[n]}{s})$ whose edge set
consists of all $s$-element subsets of $[n]$ is called the 
complete $s$-uniform hypergraph. 
We say a hypergraph $H = ([n],E)$ is a forest of complete $s$-uniform 
hypergraphs if either $H$ is a complete 
$s$-uniform hypergraph or $E = E_1 \cup E_2$, where

\begin{itemize}
    \item[(F1)] $E_1 \subseteq \binom{A}{s}$ and $E_2 = \binom{B}{s}$,
    \item[(F2)] $A \cup B = [n]$ and $\# (A \cap B) \leq 1$,
    \item[(F3)]  $(A,E_1)$ is a forest of $s$-uniform hypergraphs.
\end{itemize}

Note that the hypergraph $(B,E_2)$ in the preceding definition is a complete $s$-uniform hypergraph.

\begin{proposition} \label{prop:snaketrees}
Let $H$ be a forest of complete $s$-uniform hypergraphs.
Then $I_H(s)$ is CS.
  \end{proposition}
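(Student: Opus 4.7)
The plan is to induct on the number of complete pieces obtained by iteratively applying the decomposition in (F1)--(F3), combining the base case with Proposition~\ref{prop:linked} at each step. The base case is when $H = ([n],\binom{[n]}{s})$ is a single complete $s$-uniform hypergraph. In this case $I_H(s)$ is the ideal of maximal $s$-minors of the generic $s\times n$ matrix $X$, which is CS by the result from \cite{CDG1} recalled just above the proposition.

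For the inductive step, write $E = E_1 \cup E_2$ as in (F1)--(F3), so that $(A,E_1)$ is a forest of complete $s$-uniform hypergraphs with strictly fewer complete pieces than $H$, while $(B,\binom{B}{s})$ is a single complete $s$-uniform hypergraph. Let $I = I_{(A,E_1)}(s)$ and $J = I_{(B,E_2)}(s)$, regarded as ideals of $S$; then clearly $I_H(s) = I + J$. By the inductive hypothesis $I$ is CS, and by the base case $J$ is CS, once one observes that extending these ideals from the subrings indexed by columns in $A$ and $B$ to the full ring $S$ preserves the CS property, since the added variables appear in none of the generators and hence in none of the respective generic initial ideals.

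To conclude, I would apply Proposition~\ref{prop:linked} to $I + J$. The generators of $I$ are $s$-minors using columns only from $A$, so their $\ZZ^n$-degrees have the form $\sum_{j\in F}e_j$ with $F \subseteq A$ an $s$-subset; similarly for $J$ with $F \subseteq B$. The condition $|A \cap B| \le 1$ from (F2) translates exactly to the hypothesis of Proposition~\ref{prop:linked}: after a permutation of column indices placing the shared vertex (if any) at position $\ell$, we have $A \subseteq \{1,\dots,\ell\}$ and $B \subseteq \{\ell,\dots,n\}$, so that $I$ is generated in degrees $\le e_1+\cdots+e_\ell$ and $J$ in degrees $\le e_\ell+\cdots+e_n$. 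Proposition~\ref{prop:linked} then gives that $I_H(s) = I+J$ is CS, completing the induction.

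The main conceptual step is recognizing that the glueing-at-one-vertex condition built into the definition of a forest of complete $s$-uniform hypergraphs matches precisely the overlap allowed by Proposition~\ref{prop:linked}; this is what bypasses the lack of a higher-order substitute for the identity \eqref{expansion} that was used in the proof of Theorem~\ref{main}. The only technicality worth mentioning is the preservation of the CS property under adjoining dummy variables to the ambient ring, which is immediate from the definitions of $\gin$ and of Borel-fixedness and does not pose a real obstacle.
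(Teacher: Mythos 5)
Your proof is correct and follows the paper's argument essentially verbatim: induct on the recursive structure, use the maximal-minors result from \cite{CDG1} for the complete pieces, and apply Proposition~\ref{prop:linked} using the $\#(A\cap B)\leq 1$ overlap from~(F2). The extra care you take about extending ideals from subrings and permuting column indices are harmless technicalities that the paper leaves implicit.
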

  \begin{proof} 
    We proceed by induction on the structure of the forest.
    
    If $H$ is a complete $s$-uniform hypergraph then 
    $I_H(s)$ is CS by \cite[Theorem 3.1]{CDG3}. 
    
    Now assume that 
    $H = ([n],E)$ and (F1), (F2) and (F3) are 
    satisfied. By induction we can assume that 
    for $H_A = (A,E_1)$ we have that $I_{H_A}(s)$ is
    CS. Again by \cite[ Theorem 3.1]{CDG3} it follows that 
    $I_{H_B}(s)$ is CS for $H_B = (B,E_2)$. 
    Since by (F2) we have $\#(A \cap B) \leq 1$ it follows 
    that $I = I_{H_A}(s)$ and $J = I_{H_B}(s)$ satisfy the
    assumptions of Proposition \ref{prop:linked}, thus $I_{H_A}(s) + I_{H_B}(s) = I_H(s)$ is
    CS.
\end{proof}

Indeed the arguments used in the proof allow a further
generalization of Proposition \ref{prop:snaketrees} to 
some type of forest of CS ideals. 

For that one can use the fact, proved in \cite[Theorem 2.2, Theorem 3.1]{CDG1},  
that the ideal of maximal minors is CS with respect to
row and column grading. 
The following proposition classifies the minor ideals
which are CS with respect to either gradings.

\begin{lemma} \label{lem:colrow}
    Let $H = ([n],E)$ be the complete $s$-uniform hypergraph.
    Then the following are equivalent:
\begin{itemize}
    \item[(i)] $I_H(m) $ is CS for the column grading.
    \item[(ii)] $I_H(m) $ is CS for the row grading.
    \item[(iii)] $s =1$ or $s =2$ or $s = \min (m,n)$.
    \end{itemize}
\end{lemma}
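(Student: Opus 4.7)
The plan is to prove (iii)$\Rightarrow$(i) and (iii)$\Rightarrow$(ii) directly, deduce (i)$\Leftrightarrow$(ii) from transposition together with the symmetry of (iii), and prove (i)$\Rightarrow$(iii) by first reducing to an $(s+1)\times(s+1)$ square case and then producing an explicit obstruction.

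For (iii)$\Rightarrow$(i) and (iii)$\Rightarrow$(ii) I treat the three cases of (iii) separately. When $s=1$, $I_H(m)$ is the homogeneous maximal ideal and is trivially CS in either grading. When $s=2$, $I_H(m)=I_2(X)$ is CS in the column grading by \cite[Theorem 2.1]{CS}; for the row grading I apply the same result to $X^T$, since transposition preserves $I_2$ and interchanges the two gradings. When $s=\min(m,n)$, $I_H(m)$ is the ideal of maximal minors of $X$, which is CS in both gradings by \cite{CDG1}. The equivalence (i)$\Leftrightarrow$(ii) is then immediate: transposition sends an $m\times n$ matrix to an $n\times m$ matrix, swaps the two gradings, and preserves $I_s$; since (iii) is symmetric in $(m,n)$, proving (i)$\Leftrightarrow$(iii) for every pair $(m,n)$ yields (ii)$\Leftrightarrow$(iii), and the two together give (i)$\Leftrightarrow$(ii).

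The main content is (i)$\Rightarrow$(iii). Suppose $s\ge 3$ and $s<\min(m,n)$, so that both $m>s$ and $n>s$. By iteratively applying Proposition~\ref{sections} to the $\ZZ^n$-graded linear forms $x_{ij}$ with $i>s+1$ or $j>s+1$, the putative CS property of $I_s(X)$ would descend to the polynomial ring on a generic $(s+1)\times(s+1)$ submatrix $X'$, in which the image of $I_s(X)$ is $I_s(X')$. So it suffices to prove that $I_s(X')$ is not CS in the column grading for every $s\ge 3$.

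For this final step, the plan is to contradict Proposition~\ref{JandE}: if $I_s(X')$ were CS then $\cK_{S/I_s(X')}(1-Z)$ would have to coincide with the $\cK$-polynomial of some monomial ideal in $\M(T,\mm)$ with $\mm=(s+1,\ldots,s+1)$, a very restrictive combinatorial condition. The hard part will be exhibiting a failure of this condition uniformly in $s$. For the base case $s=3$ (a generic $4\times 4$ matrix) a finite computation of the multigraded Hilbert series, say via Macaulay2 or via the Lascoux resolution of determinantal ideals, suffices; equivalently, one verifies that the $\ZZ^4$-graded generic initial ideal of $I_3(X')$ has a non-squarefree minimal generator, which is incompatible with membership in $\Brad(S)$. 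For $s\ge 4$ a stable structural obstruction is required, which I would extract from the combinatorics of the multigraded Betti numbers furnished by the Lascoux resolution; my expectation is that these numerics single out a multidegree that cannot be realized by any ideal in $\M(T,\mm)$, giving the desired contradiction.
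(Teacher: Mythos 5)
Your treatment of (iii)$\Rightarrow$(i), (iii)$\Rightarrow$(ii) and the transposition argument for (i)$\Leftrightarrow$(ii) is fine, and the reduction of (i)$\Rightarrow$(iii) to a generic $(s+1)\times(s+1)$ matrix via Proposition~\ref{sections} is a legitimate move (the images of the $s$-minors under the specialization are exactly the $s$-minors of the submatrix). The genuine gap is the last step: for the square $(s+1)\times(s+1)$ case you only sketch a verification for $s=3$ by machine computation and then state an \emph{expectation} that the Lascoux resolution would produce a ``stable structural obstruction'' for $s\ge 4$. That is not a proof; no obstruction is actually exhibited, and it is not evident that the proposed $\cK$-polynomial comparison against $\M(T,\mm)$ can be carried out uniformly in $s$ without substantial new work.

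The paper avoids all of this by invoking a sharper and already-available invariant: a CS ideal must have multiplicity-free multidegree (\cite[Proposition 2.6]{CDG4}), and Theorem~4.6.17 of \cite{BCRV} classifies exactly when the multidegree of the ideal of $s$-minors (in either the row or column grading) is multiplicity free --- namely, precisely in the cases $s=2$ or $s=\min(m,n)$ (together with the trivial $s=1$). That single citation disposes of both (i)$\Rightarrow$(iii) and (ii)$\Rightarrow$(iii) for all $s\ge 2$ at once, with no reduction, no case analysis, and no resolution computations. If you want to salvage your approach, replacing the proposed Lascoux-resolution analysis with the multiplicity-free multidegree criterion applied directly to $I_s(X')$ would close the gap, but at that point the reduction to the square case is no longer needed and you recover the paper's proof.
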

\begin{proof}
The fact that (iii) implies (i) and (ii) is proved in
\cite{CDG1}. 
The implications (i) $\Rightarrow$ (iii) and 
(ii) $\Rightarrow$ (iii) are trivial for $s =1$ and 
follow from Theorem 4.6.17 in
\cite{BCRV}. Indeed the theorem shows that for $s \geq 2$ only in the cases listed in (iii) the $\ZZ^n$-degree is multiplicity free. However, that condition is necessary for an ideal to be CS \cite[Proposition 2.6]{CDG4}.
\end{proof}

Note, that in general ideals of minors which are CS with respect to the column grading do not have to be CS with respect to the row grading and vice versa. 
For example consider the ideal $I_G(2)$ of $2$-minors
for the graph $G = ([4],\{\,\{1,2\},\{3,4\}\,\})$.
By Theorem \ref{main} $I_G(2)$ is CS with respect to the
column grading. Since the two generators 
of $I_G(2)$ use different sets of variables, $I_G(2)$ is
a complete intersection. With the row grading both 
generators have degree $e_1+e_2$. But the sequence 
$\cA$ of sets
$A_1 = A_2 = \{1,2\}$ defines a graph $G(\cA)$ with two
vertices and two edges labeled $1$ and $2$ between the two 
vertices. This allows for cycles with non-constant edges labels. Hence by Proposition \ref{prop:456} it follows
that $I_G(2)$ is not CS with respect to the row labeling.

Now we can provide an example which indicates how far
one can push the proof method of
Proposition \ref{prop:snaketrees}. 
We confine our discussion of the extension to an example
since the actual definitions would be rather technical.

\begin{example}

Let $X$ be a generic $10 \times 14$ matrix of variables. 
Consider the following set of ideals (see also Figure \ref{fig:minors}):
\begin{itemize}
    \item $J_1$ the ideal of $3$-minors from 
rows $1$ to $3$ and columns $1$ to $4$, 
    \item $J_2$ the ideal of $3$-minors. 
    from rows $2$ to $5$ and columns $4$ to $6$,
    \item $J_3$ the ideal of $4$-minors from 
    rows $5$ to $8$ and columns $6$ to $10$,
    \item $J_4$ the ideal of $2$-minors from 
    rows $9$ and $10$ and columns $11$ to $14$.
    \end{itemize}
    Each ideal is an ideal of maximal minors and 
    hence by Lemma \ref{lem:colrow} is CS in the set of
    variables from the respective rows and columns. 
    Since we use column grading adding more rows does
    not change the $\cK$-polynomial. 
    Therefore, by definition each of them is CS as an ideal in  the polynomial ring $K[X]$. 
    Since the column indices for the four ideals are either pairwise disjoint or  share exactly one element, the
    arguments used in the proof of Proposition \ref{prop:snaketrees} apply and show that 
    $J_1 + J_2 + J_3 + J_4$
    is CS.

Note that for the example only the intersection property of the column indices is needed. Also the row and column sets 
of the maximal minor ideals do not have to be consecutive. Indeed, the argument allows for arbitrary sums of ideals 
$J_1,\ldots, J_r$ satisfying:   
\begin{itemize}
    \item each $J_i$ is CS in the column grading  and 
    \item  
    if $A_i$ is the union of the support sets of the 
    multidegrees of the
    generators of $J_i$, then 
    $\#\big( A_i \cap \bigcup_{j \neq i} A_j \,\big) \leq 1$.
    \end{itemize}

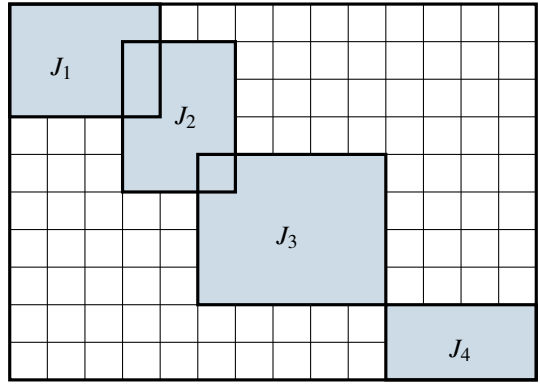
\begin{figure}[ht!]
\definecolor{airforceblue}{rgb}{0.33,0.53,0.67}
\begin{tikzpicture}[scale=0.5]

\foreach \i in {0,...,14} {
    \draw (\i,0) -- (\i,10);}
 \foreach \i in {0,...,10} { 
    \draw (0,\i) -- (14,\i);
}

\foreach \r in {7,8,9} 
\foreach \c in {0,1,2,3} 
    \fill[airforceblue!30] (\c,\r) rectangle ++(1,1);

\foreach \r in {5,6,7,8} 
\foreach \c in {3,4,5} 
    \fill[airforceblue!30] (\c,\r) rectangle ++(1,1);

\foreach \r in {2,3,4,5} 
\foreach \c in {5,6,7,8,9}
    \fill[airforceblue!30] (\c,\r) rectangle ++(1,1);

\foreach \r in {0,1} 
\foreach \c in {10,11,12,13}
    \fill[airforceblue!30] (\c,\r) rectangle ++(1,1);

\draw[very thick] (0,0) rectangle (14,10);
\draw[very thick] (0,10) rectangle (4,7);
\draw[very thick] (3,5) rectangle (6,9);
\draw[very thick] (5,2) rectangle (10,6);
\draw[very thick] (10,0) rectangle (14,2);
\node[label={45:$J_1$}] at (0.6,7.5) {};
\node[label={45:$J_2$}] at (3.9,6.2) {};
\node[label={45:$J_3$}] at (6.6,3) {};
\node[label={45:$J_4$}] at (11.2,0) {};
\end{tikzpicture}
\caption{Example of CS-minor ideal} \label{fig:minors}
\end{figure}
\end{example}

As we will demonstrate now, there are large classes of $m$-uniform hypergraphs $H = ([n],E)$ 
for which $I_H(m)$ is not CS. 
Given $j \in \NN$ we write $[j]_m$ for
 the set $\{j,j+1,\cdots,j+m-1\}$ with $0<j\leq n-m+1$. We have: 

\begin{lemma}
\label{complete}
  Let $1\leq j_1<j_2<\dots <j_t \leq n-m+1$ be such that $j_{\ell+1}-j_\ell>1$ for all $\ell=1,\dots, t-1$. 
  Let $H = ([n],E)$ be the $m$-uniform hypergraph with 
  edges $[j_\ell]_m$ for $1 \leq l \leq t$.
  Then $I_H(m)$ is a prime ideal and the
  defining generators form a regular sequence. 
 
\end{lemma}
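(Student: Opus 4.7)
The plan is twofold: first establish the regular sequence property via a Gr\"obner basis argument with a diagonal term order, and then prove primality by induction on $t$ using a geometric irreducibility argument that exploits the presence of a ``new'' column in $f_t$.

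For the regular sequence, I would fix the lex term order with $x_{i,j}>x_{i',j'}$ whenever $i<i'$, or $i=i'$ and $j<j'$. Then $\ini(f_\ell)=x_{1,j_\ell}x_{2,j_\ell+1}\cdots x_{m,j_\ell+m-1}$, the main-diagonal product of the block $[j_\ell]_m$. For $\ell\neq\ell'$ the equation $x_{i,j_\ell+i-1}=x_{i',j_{\ell'}+i'-1}$ forces $i=i'$ and $j_\ell=j_{\ell'}$, so the leading monomials have pairwise disjoint supports. By Buchberger's criterion $\{f_1,\ldots,f_t\}$ is a Gr\"obner basis and $\ini(I_H(m))$ is a squarefree monomial complete intersection, so $f_1,\ldots,f_t$ is a regular sequence and $I_H(m)$ is radical.

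For primality, since $I_H(m)$ will already be radical it suffices to show that $V:=V(I_H(m))$ is irreducible, and I would do this by induction on $t$; the base $t=1$ is clear as $f_1$ is irreducible. In the inductive step, $j_t+m-1\in[j_k]_m$ for $k<t$ would force $j_k\ge j_t$, so the variables $y_1,\ldots,y_m$ of column $j_t+m-1$ do not occur in $f_1,\ldots,f_{t-1}$. Setting $S_0:=K[x_{i,c}:c\neq j_t+m-1]$ and $P:=(f_1,\ldots,f_{t-1})\subset S_0$, the variety $V_{t-1}$ factors as $V'_{t-1}\times\mathbb{A}^m$ with $V'_{t-1}:=V(P)$ irreducible by induction. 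Cofactor expansion of $f_t$ along column $j_t+m-1$ gives
\[
f_t=\sum_{i=1}^{m}(-1)^{i+m}\,y_i\,M_i,
\]
where $M_i$ is the $(m-1)$-minor of rows $[m]\setminus\{i\}$ and columns $j_t,\ldots,j_t+m-2$. Letting $B:=V(M_1,\ldots,M_m)\cap V'_{t-1}$, the variety $V_t=V_{t-1}\cap V(f_t)$ is the union of a rank-$(m-1)$ vector subbundle of $(V'_{t-1}\setminus B)\times\mathbb{A}^m$ with the ``bad'' stratum $B\times\mathbb{A}^m$.

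The hardest step will be establishing the codimension bound $\dim B\le\dim V'_{t-1}-2$. The ideal $(M_1,\ldots,M_m)$ is the ideal of maximal minors of a generic $m\times(m-1)$ submatrix of $X$ and hence has height $2$ in $S_0$; moreover, with the diagonal term order the variable supports of $\ini(M_i)$ and $\ini(f_k)$ are pairwise disjoint for $k<t$: the row-$m$ variable of $\ini(f_k)$ is $x_{m,j_k+m-1}$, while every variable of $\ini(M_i)$ lies in columns $j_t,\ldots,j_t+m-2$, and the separation hypothesis $j_{t-1}\le j_t-2$ rules out any coincidence. Therefore $\ini(P)+(\ini M_1,\ldots,\ini M_m)$ is a sum of monomial ideals in disjoint variable sets of combined height $(t-1)+2=t+1$, whence $P+(M_1,\ldots,M_m)$ has height $t+1$ in $S_0$ and $(M_1,\ldots,M_m)R$ has height $2$ in $R:=S_0/P$. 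Consequently $\dim(B\times\mathbb{A}^m)\le\dim V_t-1$, so $V_t\setminus(B\times\mathbb{A}^m)$ is open and dense in $V_t$; as a rank-$(m-1)$ vector subbundle of $(V'_{t-1}\setminus B)\times\mathbb{A}^m$ over an irreducible base it is irreducible, and hence its closure $V_t$ is irreducible, completing the induction. The separation hypothesis $j_{\ell+1}-j_\ell>1$ is exactly what makes this codimension bound work.
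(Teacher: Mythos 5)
Your regular-sequence argument is exactly the one the paper uses: the initial terms of the $\Delta_{j_\ell}$ with respect to a diagonal term order are pairwise coprime, hence form a Gr\"obner basis and a complete intersection with squarefree initial ideal, giving both the regular sequence and radicality.

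For primality, however, your route is genuinely different from the paper's. The paper takes the algebraic ``Nagata trick'': it introduces the $(m-1)$-minors $W_{j_\ell+1}$ in the top $m-1$ rows, notes that the separation hypothesis makes their diagonal leading terms coprime to those of the $\Delta_{j_\ell}$, deduces that $f=W_{j_1+1}\cdots W_{j_t+1}$ is a nonzerodivisor on $S/I$, and then shows $(S/I)_f$ is a localization of a polynomial ring by using each Laplace expansion $\Delta_{j_\ell}=\pm x_{m,j_\ell}W_{j_\ell+1}+F_\ell$ to eliminate $x_{m,j_\ell}$. You instead prove irreducibility of $V_t$ geometrically, by induction, exhibiting $V_t$ (away from the bad locus $B\times\mathbb{A}^m$) as the total space of a rank-$(m-1)$ vector subbundle over the irreducible $V'_{t-1}\setminus B$, and you control the bad locus with a codimension estimate obtained from the disjointness of the supports of $\ini(M_i)$ and $\ini(f_k)$, which again follows from the separation hypothesis $j_{t-1}\le j_t-2$. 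Your estimate is correct (if $\ini(f_k)$ and $\ini(M_i)$ share a variable $x_{a,c}$, one gets $j_t-j_k\in\{0,1\}$, which is excluded), and the remaining pieces — the bundle is irreducible over an irreducible base, and its closure has the right dimension — are sound. The paper's argument is shorter and purely algebraic; yours packages the same disjointness of leading terms into a transversality/codimension statement and makes the geometry visible. One detail you should make explicit: the inference ``$\dim(B\times\mathbb{A}^m)\le\dim V_t-1$, hence $V_t\setminus(B\times\mathbb{A}^m)$ is dense in $V_t$'' is not true for arbitrary varieties; it needs that no irreducible component of $V_t$ is contained in $B\times\mathbb{A}^m$. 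Here this holds because $f_1,\dots,f_t$ is a regular sequence, so $V_t$ is a complete intersection and therefore equidimensional (unmixed) of dimension $\dim V_t$; since you have already established the regular-sequence property, this is available, but it should be stated, as without it the density claim does not follow from the dimension count alone.
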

\begin{proof}
 For $1 \leq \ell \leq t$ let $\Delta_{j_\ell}$ be the $m \times m$-minor of $X$ with column set $[j_\ell]_m$. 
 The minors $\Delta_{j_1}, \ldots ,
 \Delta_{j_t}$ have disjoint sets of
 variables on their respective main diagonals.
 It follows that, with respect to a diagonal term order, their initial terms are pairwise coprime. This implies that 
 $\Delta_{j_1}, \ldots \Delta_{j_t}$ is
 a regular sequence (see for example  \cite[Proposition 1.2.12]{BCRV}  or \cite[Problem 3.1, p.50]{monomialidealsbook}).  

 To prove that the ideal $I_H(m) =\big(\,\Delta_{j_1}, \Delta_{j_2},\dots, \Delta_{j_t}\,\big)$ is prime we consider the $(m-1)$-minor $W_b$ of $X$ with rows $1,2,\dots m-1$ and columns $b,b+1,\dots,b+m-2$. Let $f$ be the product $W_{j_1+1}
 \cdots W_{j_t+1}$. The assumption $j_{\ell+1}-j_\ell>1$ guaranties that the diagonal leading term of each $W_{j_{\ell'}+1}$ is coprime to 
 the diagonal leading terms of the $\Delta_{j_\ell}$ for $1 \leq \ell,\ell'\leq t$. Hence by the same arguments as above 
 $\Delta_{j_1}, \Delta_{j_2},\ldots, \Delta_{j_t},f$ is a regular sequence as well.  In particular, $f$ is a non-zero divisor on $S/I$. This implies that $S/I$ embeds in $(S/I)_f$. Therefore, it is enough to prove that $(S/I)_f=S_f/IS_f$ is a domain. Since $f$ is invertible in $S_f$ every factor of $f$ is invertible too.
 In particular, $W_{j_\ell+1}$ is invertible for all $\ell$. 
Now by Laplace expansion 
$$\Delta_{j_\ell}=x_{mj_\ell}W_{j_\ell+1}+F_\ell$$ 
with $F_\ell$ a polynomial in the variables $x_{hk}$ for $k=j_\ell$ and $h<m$ or $k>j_\ell$. It follows that in $S_f$ we have 
$$W_{j_\ell+1}^{-1}\Delta_{j_\ell}=x_{mj_\ell}+W_{j_\ell+1}^{-1}F_\ell$$ 
and 
$$IS_f=\big(\,x_{mj_\ell}+W_{j_\ell+1}^{-1}F_\ell : \ell=1,\dots,t \,\big).$$
This shows that $S_f/IS_f$ is isomorphic to a localization of a polynomial ring over a field, hence a domain.  
\end{proof}

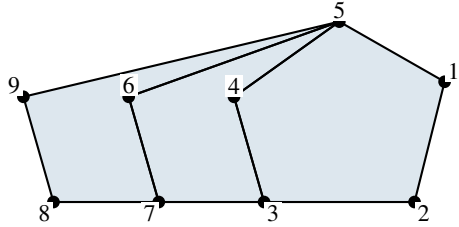
\begin{figure}[ht!]
    \centering
    \definecolor{airforceblue}{rgb}{0.33,0.53,0.67}

 \begin{tikzpicture}[scale=2, every node/.style={fill=white, inner sep=1pt, font=\small}]

\coordinate (3) at (0,0);
\coordinate (2) at (1,0);
\coordinate (1) at (1.2,0.8);
\coordinate (5) at (0.5,1.2);
\coordinate (4) at (-0.2,0.7);

\coordinate (6) at ($(4)+(-0.7,0)$);
\coordinate (7) at ($(3)+(-0.7,0)$);

\coordinate (8) at ($(7)+(-0.7,0)$);
\coordinate (9) at ($(6)+(-0.7,0)$);

\fill[airforceblue!20]  (1)--(2)--(3)--(4)--(5)--cycle; 
\fill[airforceblue!20] (3)--(4)--(5)--(6)--(7)--cycle; 
\fill[airforceblue!20]  (5)--(6)--(7)--(8)--(9)--cycle; 

 \draw[thick] (1) -- (2) -- (3) -- (4) -- (5) -- (1);
 \draw[thick] (3) -- (4) -- (5) -- (6) -- (7) -- (3);
 \draw[thick] (5) -- (6) -- (7) -- (8) -- (9) -- (5);
   
  \fill (1) circle (1.2pt) node[above right] {$1$};
\fill (2) circle (1.2pt) node[below right] {$2$};
\fill (3) circle (1.2pt) node[below right] {$3$};
\fill (4) circle (1.2pt) node[above] {$4$};
\fill (5) circle (1.2pt) node[above] {$5$};
\fill (6) circle (1.2pt) node[above] {$6$};
\fill (7) circle (1.2pt) node[below left] {$7$};
\fill (8) circle (1.2pt) node[below left] {$8$};
\fill (9) circle (1.2pt) node[above left] {$9$};\end{tikzpicture}
  
\caption{Example of a $5$-uniform hypergraph satisfying the conditions of Lemma \ref{complete} for
$j_1 = 1$, $j_2 = 3$ and $j_3 = 5$} \label{fig:complete}

\end{figure}

Using the lemma we can derive a set of obstructions to 
the CS property.

\begin{proposition} \label{prop:obstruct} Let $m \geq 3$ and 
$t\in \NN$ be such that $(t+1)(m-1)+1\leq n$. Let $H = ([n],E)$ be the
$m$-uniform hypergraph with edges
$$[1]_m,[m]_m,[2m-1]_m,\ldots,[\ell(m-1)+2-m]_m,\ldots,[t(m-1)+2-m]_m,$$
and 
$$[(t+1)(m-1)+2-m]_m \setminus \{(t+1)(m-1)+1\} \cup \{1\}.$$
Then $I_H(m)$ is not CS. 
\end{proposition}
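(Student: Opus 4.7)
The approach is to apply Proposition \ref{prop:456} in its contrapositive form. Since $X$ is $m\times n$, each edge $A_\ell$ of $H$ contributes exactly one $m\times m$ minor $\Delta_\ell$, so
\[
I_H(m)=(\Delta_1,\ldots,\Delta_{t+1}),
\]
with $\Delta_\ell$ of $\ZZ^n$-multidegree $A_\ell$. The proof thus reduces to two claims: (a) $\Delta_1,\ldots,\Delta_{t+1}$ is a regular sequence in $S$, and (b) the graph $G(\cA)$ associated to $\cA=A_1,\ldots,A_{t+1}$ admits a cycle with non-constant edge labels. Together these force $I_H(m)$ to fail condition (2) of Proposition \ref{prop:456} and hence to be non-CS.

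For (a), I would apply Lemma \ref{complete} to $j_\ell=(\ell-1)(m-1)+1$ for $1\leq \ell\leq t$: the gap $j_{\ell+1}-j_\ell=m-1\ge 2$ holds because $m\ge 3$, and $j_t\le n-m+1$ follows from $(t+1)(m-1)+1\le n$. Thus $(\Delta_1,\ldots,\Delta_t)$ is a prime ideal of height $t$, so $S/(\Delta_1,\ldots,\Delta_t)$ is a domain and it suffices to show $\Delta_{t+1}\notin (\Delta_1,\ldots,\Delta_t)$. This I would establish by a $\ZZ^n$-multidegree argument: any relation $\Delta_{t+1}=\sum_{\ell=1}^t g_\ell \Delta_\ell$, after projection onto the multihomogeneous piece of degree $A_{t+1}$, forces each non-zero contribution of $g_\ell$ to have multidegree $A_{t+1}-A_\ell$. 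But one checks directly from the definitions that $\mathrm{supp}(A_\ell)\not\subseteq \mathrm{supp}(A_{t+1})$ for every $\ell\le t$, so $A_{t+1}-A_\ell$ has a negative entry and no such $g_\ell$ exists. Consequently $\Delta_{t+1}$ is a non-zero-divisor on $S/(\Delta_1,\ldots,\Delta_t)$, completing the regular sequence.

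For (b), the pairwise intersections of the $A_\ell$'s are straightforward: $A_\ell\cap A_{\ell+1}=\{\ell(m-1)+1\}$ for $1\le \ell\le t-1$, $A_t\cap A_{t+1}=\{t(m-1)+1\}$, and $A_1\cap A_{t+1}=\{1\}$ when $t\ge 2$ (while $A_1\cap A_2=\{1,m\}$ in the degenerate case $t=1$), all remaining intersections being empty. For $t\ge 2$ these intersections yield a cycle $A_1, A_2, \ldots, A_{t+1}, A_1$ in $G(\cA)$ of length $t+1$ whose labels $m,2m-1,\ldots,t(m-1)+1,1$ are pairwise distinct. For $t=1$ we obtain two parallel edges between $A_1$ and $A_2$ labeled $1$ and $m$, i.e., a cycle of length two with non-constant labels. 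In either case $G(\cA)$ contains a cycle with non-constant edge labels.

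The main obstacle is (a), specifically extending the regular sequence by $\Delta_{t+1}$. A naive coprimality-of-leading-terms argument fails because $\Delta_1$ and $\Delta_{t+1}$ share the variable $x_{1,1}$ in any diagonal initial ideal; the $\ZZ^n$-multidegree obstruction is precisely what circumvents this difficulty and makes the argument clean.
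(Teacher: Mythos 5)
Your proof is correct and follows essentially the same route as the paper: apply Lemma~\ref{complete} to the first $t$ minors to obtain a prime ideal generated by a regular sequence, show the remaining minor lies outside that prime ideal by a $\ZZ^n$-degree argument, and then invoke Proposition~\ref{prop:456} via the cycle in $G(\cA)$. The only (inessential) difference is that the paper excludes $W$ by noting it has the same total degree $m$ as the other generators but a different $\ZZ^n$-degree, so any membership would force scalar coefficients and hence equality of degrees, whereas you verify the support non-containment $A_\ell\not\subseteq A_{t+1}$ explicitly; both arguments are valid.
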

\begin{proof}

For $1 \leq \ell \leq t$ let $\Delta_{\ell(m-1)+2-m}$ be the $m$-minor of $X$ with column set $[\ell(m-1)+2-m]_m$ and let $W$ the $m$-minor of $X$ with 
column set $[(t+1)(m-1)+2-m]_m \setminus \{(t+1)(m-1)+1\} \cup \{1\}$.

By $m \geq 3$ the assumptions of 
Lemma \ref{complete} are satisfied for the $m$-uniform hypergraph $H' = ([n],E')$
with edges
$[1]_m,[m]_m,[2m-1]_m,\ldots,[\ell(m1-)+2-m]_m,\ldots,[t(m-1)+2-m]_m$. It follows that the ideal $$I_{H'}(m) =\big(\,\Delta_{1}, \Delta_{m}, \Delta_{2m-1}, \Delta_{3m-2}\dots, \Delta_{tm-t+1}\,\big)$$ is prime and its defining generators form a regular sequence. Observe, that $W\not\in I_{H'}(m)$ because $W$ is of the same total degree as the other generators but of different $\ZZ^n$-degree.  Hence the generators of $I_H(m)$ form are regular
sequence.

In our situation condition (1) 
of Proposition \ref{prop:456} 
is not satisfied by the support of the degrees $\cA$ of the generators of $I_H(m)$.
Indeed, $G(\cA)$ is actually a single 
cycle with distinct labels. 
Hence, Proposition \ref{prop:456} implies 
that $I_H(m)$ is not CS. 
\end{proof}

\begin{figure}
\definecolor{airforceblue}{rgb}{0.33,0.53,0.67}
\begin{tikzpicture}[scale=1.2]

\node at (1.5,2.5) {Proposition \ref{prop:obstruct}: $m=s=3$, $n = 4$};
\begin{scope}[shift={(0,0)}]
  \coordinate (p2) at (0,0);
  \coordinate (p1) at (2,0);
  \coordinate (p3) at (1,1.5);
  \coordinate (p4) at (3,1.5);
  
  \fill[airforceblue!20] (p1) -- (p2) -- (p3) -- cycle;
  \fill[airforceblue!20] (p1) -- (p3) -- (p4) -- cycle;
  
  \draw[thick] (p2) -- (p3) -- (p4) -- (p1);
  \draw[thick] (p2) -- (p1) -- (p3);
  
  \foreach \i/\p in {1/p1, 2/p2, 3/p3, 4/p4} {
    \filldraw (\p) circle (2pt) node[font=\small, anchor=center,
    xshift=-0.2cm,yshift=(int((\i-1)/2)*2-1)*0.2 cm] {\i};
  }
\end{scope}

\node at (7,2.5) {Proposition \ref{prop:obstruct}: $m=s=3$, $n = 8$};
\begin{scope}[shift={(7,0)}]

  \foreach \i in {1,...,8} {
    \coordinate (p\i) at ({90 - 45*(\i - 1)}:2cm);
  }

  \fill[airforceblue!20] (p1) -- (p2) -- (p3) -- cycle;
  \fill[airforceblue!20] (p3) -- (p4) -- (p5) -- cycle;
  \fill[airforceblue!20] (p5) -- (p6) -- (p7) -- cycle;
  \fill[airforceblue!20] (p1) -- (p7) -- (p8) -- cycle;

  \draw[thick] (p1) -- (p2) -- (p3) -- (p1);
  \draw[thick] (p3) -- (p4) -- (p5) -- (p3);
  \draw[thick] (p5) -- (p6) -- (p7) -- (p5);
  \draw[thick] (p1) -- (p7) -- (p8) -- (p1);

   \foreach \i in {1,2,3} {
    \filldraw (p\i) circle (2pt) node[font=\small, anchor=center,
    xshift=0.3cm,yshift=0.1 cm] {\i};}
  \foreach \i in {7,8} {
    \filldraw (p\i) circle (2pt) node[font=\small, anchor=center,
    xshift=-0.3cm,yshift=0.1 cm] {\i};}
     \foreach \i in {6} {
    \filldraw (p\i) circle (2pt) node[font=\small, anchor=center,
    xshift=-0.3cm,yshift=-0.1 cm] {\i};}
     \foreach \i in {4,5} {
    \filldraw (p\i) circle (2pt) node[font=\small, anchor=center,
    xshift=0.3cm,yshift=-0.1 cm] {\i};}
\end{scope}

\end{tikzpicture}

\bigskip

\centering

\begin{tikzpicture}[scale=1.2]

\node at (0,3) {Example \ref{ex:strange}:}; 

\begin{scope}[shift={(0,0)}]
  
  \foreach \i/\ang in {1/90, 3/-30, 5/-150} {
    \coordinate (p\i) at (\ang:2);
  };
 \foreach \i/\ang in {2/30, 4/-90, 6/150} {
    \coordinate (p\i) at (\ang:0.5);
  };

  \fill[airforceblue!20] (p1) -- (p2) -- (p3) -- cycle; 
  \fill[airforceblue!20] (p3) -- (p4) -- (p5) -- cycle; 
  \fill[airforceblue!20] (p1) -- (p5) -- (p6) -- cycle; 
  \fill[airforceblue!20] (p2) -- (p4) -- (p6) -- cycle;

  \draw[thick] (p1) -- (p2) -- (p3) -- cycle;
  \draw[thick] (p3) -- (p4) -- (p5) -- cycle;
  \draw[thick] (p1) -- (p5) -- (p6) -- cycle;
  \draw[thick] (p2) -- (p4) -- (p6) -- cycle;

  \foreach \i in {1,6} {
    \filldraw (p\i) circle (2pt)
      node[font=\small, anchor=south east, xshift=-0.05cm, yshift=0.05cm] {\i};
  }
  \foreach \i in {2,3} {
    \filldraw (p\i) circle (2pt)
      node[font=\small, anchor=south west, xshift=0.05cm, yshift=0.05cm] {\i};
  }
  \foreach \i in {4,5} {
    \filldraw (p\i) circle (2pt)
      node[font=\small, anchor=north, yshift=-0.12cm] {\i};
  }
\end{scope}
\end{tikzpicture}

\caption{Non CS Configurations from Proposition \ref{prop:obstruct} and Example \ref{ex:strange}}
\label{fig:obs}
\end{figure}

Unfortunately, the hypergraphs from  Proposition \ref{prop:obstruct} are not the
only obstructions for $I_H(m)$ to be CS.

\begin{example} \label{ex:strange}
Let $s =3$, $n= 6$ and $H = ([6],E)$ the 
$3$-uniform hypergraph with 
        $$E = \big\{ \,\{1,2,3\},\{3,4,5\},\{5,6,1\}, \{2,4,6\}\,\big\} \text{ (see Figure \ref{fig:obs})}.$$
        Then computer experiments show that, at least over a field of characteristic  $0$, the ideal $I_H(3)$ is not CS.
\end{example}

\section*{Acknowledgments}

We are grateful to Srikanth Iyengar for providing us with the arguments for
Proposition \ref{SriL}. Our original proof of Corollary \ref{cor:torvanishing} was inductive and less elegant.

\bibliographystyle{plainurl}

  \end{document}